\theoremstyle{plain}
\newtheorem{theorem}{Theorem}
\newtheorem*{theorem*}{Theorem}
\newtheorem{proposition}[theorem]{Proposition}
\newtheorem{lemma}[theorem]{Lemma}
\newtheorem{corollary}[theorem]{Corollary}
\newtheorem{conjecture}[theorem]{Conjecture}
\newtheorem{definition}[theorem]{Definition}
\newtheorem{example}[theorem]{Example}
\newtheorem{observation}[theorem]{Observation}
\theoremstyle{remark}
\newtheorem{remark}[theorem]{Remark}
\newcommand{\bQ}{\mathbb{Q}}
\newcommand{\suchthat}{\;|\;}
\newcommand{\spam}{\operatorname{span}}
\newlength\cellsize \setlength\cellsize{15\unitlength}
\newcommand\cellify[1]{\def\thearg{#1}\def\nothing{}%
\ifx\thearg\nothing
\vrule width0pt height\cellsize depth0pt\else
\hbox to 0pt{\usebox2\hss}\fi%
\vbox to 15\unitlength{
\vss
\hbox to 15\unitlength{\hss$#1$\hss}
\vss}}
\newcommand\tableau[1]{\vtop{\let\\=\cr
\setlength\baselineskip{-16000pt}
\setlength\lineskiplimit{16000pt}
\setlength\lineskip{0pt}
\halign{&\cellify{##}\cr#1\crcr}}}
\newcommand\expath[1]{%
\hbox to 0pt{\usebox3\hss}%
\vbox to 15\unitlength{
\vss
\hbox to 15\unitlength{\hss$#1$\hss}
\vss}}
\newcommand\bas[1]{\omit \vbox to \cellsize{ \vss \hbox to \cellsize{\hss$#1$\hss} \vss}}
\begin{document}

\title[Positivity of trees and cut vertices]{Schur and $e$-positivity of trees and cut vertices}

\author{Samantha Dahlberg}
\address{
School of Mathematical and Statistical Sciences,
Arizona State University,
Tempe AZ 85287-1804, USA}
\email{sdahlber@asu.edu}

\author{Adrian She}
\address{
Department of Computer Science,
University of Toronto, 
Toronto ON M5S 2E4, Canada}
\email{ashe@cs.toronto.edu}

\author{Stephanie van Willigenburg}
\address{
 Department of Mathematics,
 University of British Columbia,
 Vancouver BC V6T 1Z2, Canada}
\email{steph@math.ubc.ca}

\thanks{All authors were supported  in part by the National Sciences and Engineering Research Council of Canada.}
\subjclass[2010]{Primary 05E05; Secondary 05C05, 05C15, 05C70, 16T30, 20C30}
\keywords{chromatic symmetric function,  cut vertex, elementary symmetric function, Schur function, spider, tree}

\begin{abstract}
We prove that the chromatic symmetric function of any $n$-vertex tree containing a vertex of degree $d\geq \log _2n +1$ is not $e$-positive, that is, not a positive linear combination of elementary symmetric functions. Generalizing this, we also prove that the chromatic symmetric function of any $n$-vertex connected graph containing a cut vertex whose deletion disconnects the graph into $d\geq\log _2n +1$ connected components is not $e$-positive. Furthermore we prove that any $n$-vertex bipartite graph, including all trees, containing a vertex of degree greater than $\lceil \frac{n}{2}\rceil$ is not Schur-positive, namely not a positive linear combination of Schur functions.  {In complete generality, we prove that if an $n$-vertex connected graph has no perfect matching (if $n$ is even) or no almost perfect matching (if $n$ is odd), then it is not $e$-positive. We hence deduce that many graphs containing the claw are not $e$-positive.}
\end{abstract}

\maketitle
\tableofcontents

\section{Introduction}\label{sec:intro}  The generalization of the chromatic polynomial, known as the chromatic symmetric function, was introduced by Stanley in 1995 \cite{Stan95} and has seen a resurgence of interest and activity recently. Much of this has centred around trying to resolve the 1995 conjecture of Stanley \cite[Conjecture 5.1]{Stan95} and its equivalent incarnation \cite[Conjecture 5.5]{StanStem}, which states that if a poset is $(3+1)$-free, then its incomparability graph is a nonnegative linear combination of elementary symmetric functions, that is, $e$-positive. The study of chromatic symmetric function $e$-positivity \cite{ChoHuh, Dahl, lollipop, Foley, FoleyKin, Gash, GebSag, GP, Hamel, MM, HuhNamYoo,  Wolfe}, and related Schur-positivity \cite{Gasharov, Paw, SW, Stanley2}, is also an active area due to connections to the representation theory of the symmetric and general linear group. 

Many partial results regarding chromatic symmetric functions have been obtained such as  when the graph involved  is the path or the cycle \cite{lollipop, Stan95, Wolfe}, when the graph is formed from complete graphs \cite{ChoHuh, GebSag, MM}, or when a graph avoids another \cite{Foley, Gash, Hamel, Tsujie}. These proofs have not always worked directly with the chromatic symmetric function. Instead, sometimes generalizations of the chromatic symmetric function have been employed such as to quasisymmetric functions \cite{ChoHuh, MM, SW} and noncommutative symmetric functions \cite{GebSag}.

Another research avenue that has seen activity is to determine whether two nonisomorphic trees can have the same chromatic symmetric function \cite{Jose2+1, Jose2, HeilJi, Loebl, MMW, Orellana}. The data for up to 29 vertices \cite{HeilJi} shows that two trees $T_1, T_2$ have the same chromatic symmetric function if and only if $T_1$ and $T_2$ are isomorphic. Further evidence towards this includes that for $T_1$ and $T_2$ to have the same chromatic symmetric function they must have the same number of vertices, edges and matchings, and many of these results have been collected together in \cite{Orellana}. 

In this paper, we meld these two avenues and discover criteria on trees and graphs with cut vertices that ensure they are not $e$-positive or not Schur-positive. In particular, we discover a trove of trees that are not $e$-positive, supporting Stanley's observation from 1995 \cite[p 187]{Stan95} that a tree is likely only to be $e$-positive ``by accident''. More precisely, this paper is structured as follows.

In Section~\ref{sec:background} we review the necessary notions before reducing the graphs we need to study to spiders in Subsection~\ref{subsec:redspiders}.  {We also prove the following in Theorem~\ref{the:perfect_matching} and relate it to the claw in Corollary~\ref{cor:clawsandmatching}.}

\begin{theorem*}  {Let $G$ be an $n$-vertex connected graph. If $G$ has no perfect matching (if $n$ is even) or no almost perfect matching (if $n$ is odd), then $G$ is not $e$-positive.}
\end{theorem*}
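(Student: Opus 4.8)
The plan is to detect the (almost) perfect matchings of $G$ by pushing $X_G$ through a carefully chosen algebra homomorphism and then reading off a single coefficient. Write $X_G$ in Stanley's power-sum form $X_G=\sum_{S\subseteq E(G)}(-1)^{|S|}p_{\lambda(S)}$, where $\lambda(S)$ records the vertex-sizes of the connected components of the spanning subgraph $(V(G),S)$. Since the $p_r$ are algebraically independent, I may define a homomorphism $\phi\colon\Lambda\to\bQ[a,b]$ by $\phi(p_1)=b$, $\phi(p_2)=a$, and $\phi(p_r)=0$ for $r\ge 3$. Applying $\phi$ annihilates every $S$ that is not a partial matching (any component with at least two edges has at least three vertices, producing a part $\ge 3$), while a matching with $j$ edges contributes $\lambda(S)=(2^j,1^{\,n-2j})$. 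Thus $\phi(X_G)=\sum_{j\ge 0}(-1)^j m_j(G)\,a^j b^{\,n-2j}$, the matching polynomial of $G$, where $m_j(G)$ is the number of $j$-edge matchings; here $m_{n/2}(G)$ (resp.\ $m_{(n-1)/2}(G)$) counts perfect (resp.\ almost perfect) matchings.

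Next I would analyze $\phi$ on the elementary basis. From $\sum_k\phi(e_k)t^k=\exp\bigl(bt-\tfrac{a}{2}t^2\bigr)$ one reads $\phi(e_k)=\sum_{l}\frac{(-a/2)^l}{l!}\,\frac{b^{\,k-2l}}{(k-2l)!}$, so the lowest power of $b$ occurring in $\phi(e_k)$ is $b^{\,k\bmod 2}$, with coefficient of sign $(-1)^{\lfloor k/2\rfloor}$. Multiplying over parts, the lowest power of $b$ in $\phi(e_\lambda)=\prod_i\phi(e_{\lambda_i})$ is $b^{\,o(\lambda)}$, where $o(\lambda)$ is the number of odd parts of $\lambda$, and the sign of that lowest coefficient is $(-1)^{(n-o(\lambda))/2}$. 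The key point is that this sign depends only on $o(\lambda)$: among all $\lambda$ with a fixed number of odd parts, the lowest-degree-in-$b$ coefficients of $\phi(e_\lambda)$ all carry the same sign, so no cancellation can occur in a nonnegative combination of them.

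Finally, suppose for contradiction that $G$ is $e$-positive, $X_G=\sum_\lambda c_\lambda e_\lambda$ with all $c_\lambda\ge 0$, and apply $\phi$. I compare the two sides of $\sum_j(-1)^j m_j a^j b^{\,n-2j}=\sum_\lambda c_\lambda\,\phi(e_\lambda)$ at the smallest possible power of $b$, which is $b^0$ when $n$ is even and $b^1$ when $n$ is odd (as $o(\lambda)\ge n\bmod 2$ always). Setting $t_0=n\bmod 2$, on the right this coefficient equals $(-1)^{(n-t_0)/2}$ times a strictly positive combination of the $c_\lambda$ with $o(\lambda)=t_0$, precisely because the signs do not cancel; on the left it is $(-1)^{\lfloor n/2\rfloor}m_{\lfloor n/2\rfloor}(G)\,a^{\lfloor n/2\rfloor}$, which vanishes exactly when $G$ has no (almost) perfect matching. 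Hence the hypothesis forces $c_\lambda=0$ for every $\lambda$ with $o(\lambda)=t_0$; in particular $c_{(n)}=0$, since the one-part partition $(n)$ has $o((n))=t_0$. This contradicts the fact that the coefficient of $e_n$ in the chromatic symmetric function of a connected graph is strictly positive.

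I expect the two delicate points to be the sign bookkeeping of the second paragraph and the positivity of $[e_n]X_G$. Verifying that the lowest-order coefficients cannot cancel is what makes the single coefficient $c_{(n)}$ usable, and it is a short but careful computation with the generating function $\exp(bt-\tfrac{a}{2}t^2)$. The positivity $[e_n]X_G>0$ for connected $G$ is classical: it is the value counting acyclic orientations with a unique prescribed sink, equivalently the nonvanishing of the linear coefficient of the chromatic polynomial of a connected graph (Stanley), and I would cite it rather than reprove it.
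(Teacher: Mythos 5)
Your proof is correct, but it takes a genuinely different route from the paper. The paper's own proof is a two-line reduction: a perfect matching (resp.\ almost perfect matching) is by definition a connected partition of type $(2^{\frac{n}{2}})$ (resp.\ $(2^{\frac{n-1}{2}},1)$), so the statement is an immediate corollary of Wolfgang's criterion (Theorem~\ref{the:e_positivity_crit}) that an $e$-positive connected graph has a connected partition of every type of $n$. You instead avoid that black box entirely and argue through the coefficients themselves: Stanley's power-sum expansion plus the specialization $\phi(p_1)=b$, $\phi(p_2)=a$, $\phi(p_r)=0$ for $r\geq 3$ (well defined since the $p_r$ freely generate $\Lambda$ over $\bQ$) correctly extracts the signed matching polynomial, because in a simple graph any component with two or more edges has at least three vertices; and your sign bookkeeping checks out: giving $a$ degree $2$ and $b$ degree $1$ makes $\phi$ graded, so the coefficient of $b^{t_0}$ with $t_0=n\bmod 2$ in $\phi(e_\lambda)$ is a multiple of $a^{(n-t_0)/2}$, is nonzero exactly when $o(\lambda)=t_0$ (the unique choice $l_i=\lfloor \lambda_i/2\rfloor$ in each factor), and carries the uniform sign $(-1)^{(n-t_0)/2}$, so under $e$-positivity no cancellation is possible and the absence of a perfect or almost perfect matching forces $c_\lambda=0$ for every $\lambda$ with $o(\lambda)=t_0$, including $\lambda=(n)$. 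The final ingredient, $[e_{(n)}]X_G>0$ for connected $G$, is indeed classical and unconditional: by \cite[Theorem 3.3]{Stan95} it counts acyclic orientations with exactly one sink, and a connected graph always has one (orient along a breadth-first order toward a chosen root). Comparing the two: the paper's argument is shorter and its connected-partition technique is the engine behind all of its later non-$e$-positivity results, whereas your argument is self-contained modulo two classical facts, yields strictly more information (all coefficients $c_\lambda$ with the minimal number of odd parts must vanish, not merely the one of type $(2^{\lfloor n/2\rfloor},1^{t_0})$), and makes explicit the known specialization of $X_G$ to the matching polynomial that underlies the tree-distinguishing literature cited in the introduction (cf.\ \cite{Orellana}).
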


In Section~\ref{sec:spiders} we study the $e$-positivity of spiders including showing that a spider with at least three legs of odd length is not $e$-positive in Corollary~\ref{cor:matching_cor}. We also show that if the length of each spider leg is less than half the total number of vertices, then the spider is not $e$-positive in Lemma~\ref{lem:short_legs} and generalize this to trees and graphs in Theorem~\ref{the:gen_short_legs}. In Lemma~\ref{lem:induction_lem}, Theorem~\ref{the:induction_short_res_1} and Theorem~\ref{the:induction_short_res_2}, we show that if a spider is not $e$-positive, then we can create infinitely many more spiders from it that are not $e$-positive. Meanwhile Lemmas~\ref{lem:quotient_construction}, ~\ref{lem:quotient_construction_2} and ~\ref{lem:quotient_construction_3} give divisibility criteria on the total number of vertices, which ensure in Proposition~\ref{prop:all_e_positive_spiders} that the spider is not $e$-positive. Applying these results on spiders yields our most general result, the following, given in Theorem~\ref{the:gen_partial_e_thm}.

\begin{theorem*} If $G$ is an $n$-vertex connected graph with a cut vertex whose deletion produces a graph with $d\geq 3$ connected components such that
$$d \geq \log_2 n + 1$$then $G$ is not $e$-positive. 
\end{theorem*}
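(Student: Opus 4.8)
The plan is to derive the statement from the spider toolkit of Section~\ref{sec:spiders} together with the cut-vertex reduction of Subsection~\ref{subsec:redspiders}. First I would apply that reduction to the given graph $G$: its cut vertex $v$ separates $G$ into $d$ connected components of sizes $n_1,\dots,n_d$ with $\sum_i n_i=n-1$, and the reduction lets me replace $G$ by the spider $S$ with center $v$ and legs of lengths $n_1,\dots,n_d$, reducing the claim to the non-$e$-positivity of $S$. The key point is that this substitution preserves both invariants appearing in the hypothesis: $S$ has $1+\sum_i n_i=n$ vertices and exactly $d$ legs, so the inequality $d\ge\log_2 n+1$, equivalently $2^{d-1}\ge n$, is inherited verbatim. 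Thus everything reduces to showing that \emph{every $n$-vertex spider with $d\ge 3$ legs and $2^{d-1}\ge n$ fails to be $e$-positive.}

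Next I would dispose of the two extreme regimes using results already in hand. If at least three legs have odd length, Corollary~\ref{cor:matching_cor} (itself a consequence of Theorem~\ref{the:perfect_matching}) finishes immediately; if every leg has length strictly less than $n/2$, then Lemma~\ref{lem:short_legs}, or its graph-level form Theorem~\ref{the:gen_short_legs}, finishes. The residual case is therefore a spider with at most two odd legs and with one dominant leg of length at least $n/2$. For this case I would feed the leg data into the divisibility machinery: Lemmas~\ref{lem:quotient_construction}, \ref{lem:quotient_construction_2} and \ref{lem:quotient_construction_3} and Proposition~\ref{prop:all_e_positive_spiders} certify non-$e$-positivity once the total vertex count lies in the appropriate divisibility class, and the induction results Lemma~\ref{lem:induction_lem}, Theorem~\ref{the:induction_short_res_1} and Theorem~\ref{the:induction_short_res_2} let me grow a known non-$e$-positive spider, migrating $n$ into such a class while keeping $2^{d-1}\ge n$ intact.

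The mechanism that ties the logarithmic bound to these hypotheses is a pigeonhole count on subsets of the legs. Writing the leg lengths as $a_1,\dots,a_d\ge 1$ with $\sum_i a_i=n-1$, I observe that the $2^{d-1}\ge n$ subsets of any fixed family of $d-1$ legs have subset sums lying in a set of fewer than $n$ integers, so two distinct such subsets share a common sum; replacing each by the part outside their intersection yields two disjoint nonempty subsets $I,J$ of legs with $\sum_{i\in I}a_i=\sum_{j\in J}a_j$. Working with $d-1$ legs rather than all $d$ is exactly what lets me reserve the one or two odd legs (or the dominant leg) while still forcing the collision, and it is where the extra factor of $2$ in $2^{d-1}\ge n$, as opposed to the weaker $2^d>n$, is spent. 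This balanced split is the structural input the quotient constructions require, letting me group legs into equal-weight blocks and land the vertex count in a class covered by Proposition~\ref{prop:all_e_positive_spiders}.

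\textbf{The main obstacle} I anticipate is precisely this last linkage. Pigeonhole cheaply produces \emph{some} pair of disjoint equal-weight subsets, but the divisibility lemmas demand a split matching a \emph{specific} congruence of $n$, together with a controlled placement of the surviving odd legs, the dominant leg, and the center vertex. Turning an uncontrolled collision into a split that exactly meets the hypotheses of Proposition~\ref{prop:all_e_positive_spiders} is the delicate step, and one must simultaneously verify that each application of the induction results Theorem~\ref{the:induction_short_res_1} and Theorem~\ref{the:induction_short_res_2} preserves the inequality $d\ge\log_2 n+1$, so that the passage to a base spider stays inside the hypothesis at every stage.
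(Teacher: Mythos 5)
Your reduction to spiders via Lemma~\ref{lem:gen_spider_lem} is exactly the paper's first step, and disposing of the three-odd-legs case by Corollary~\ref{cor:matching_cor} and the all-short-legs case by Lemma~\ref{lem:short_legs} is also consistent with the paper. The genuine gap is the engine you propose for the residual case. You read Lemmas~\ref{lem:quotient_construction}, \ref{lem:quotient_construction_2} and \ref{lem:quotient_construction_3} as ``divisibility machinery'' that fires only when $n$ lies in a suitable congruence class, and you plan to reach such a class via a subset-sum pigeonhole (two disjoint sets of legs $I,J$ with $\sum_{i\in I}\lambda_i=\sum_{j\in J}\lambda_j$) plus the growth results. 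But those lemmas impose no congruence condition on $n$ at all: the quantities $q,r,d',r'$ are defined by division with remainder from $n$ and $\lambda_i$ and exist for every $n$; the real hypotheses are \emph{dominance} conditions on the leg lengths, namely $\lambda_i\leq\lambda_{i+1}+\cdots+\lambda_d$ at the chosen index with $\lambda_j>\lambda_{j+1}+\cdots+\lambda_d$ at the earlier indices. Their mechanism is that every part of the putative connected partition of type $(\lambda_i+d'+2)^{r'}(\lambda_i+d'+1)^{q-r'}$ has more than $\lambda_i$ vertices, so the part containing the leaf of leg $i$ must swallow the centre and all short legs and becomes too large; an equal-sum pair of leg subsets supplies none of this, and the bridge you flag as ``the delicate step'' does not exist along those lines. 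Moreover, the proposed use of Lemma~\ref{lem:induction_lem} and Theorems~\ref{the:induction_short_res_1}--\ref{the:induction_short_res_2} to ``migrate $n$'' is unavailable: $n$ and the leg lengths are fixed by the given graph $G$, and those results construct new non-$e$-positive spiders from old ones rather than modifying the one you are handed.

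The piece that actually converts the bound $d\geq\log_2 n+1$ into usable hypotheses in the paper is Lemma~\ref{lem:count_vertices}: if $\lambda_1\geq\lambda_2+\cdots+\lambda_d$ and $\lambda_i>\lambda_{i+1}+\cdots+\lambda_d$ for all $2\leq i\leq d-1$, then $n>2^{d-1}$. So $n\leq 2^{d-1}$ forces this chain to break at some least index, and the proof of Theorem~\ref{the:partial_e_thm} is a case analysis on that break: at $i=1$ apply Lemma~\ref{lem:short_legs}; at $i=2$ apply Lemma~\ref{lem:quotient_construction} (checking $q\geq 3$ when $\lambda_2=2$, and falling back on the matching criterion of Lemma~\ref{lem:spider_matching} and Theorem~\ref{the:perfect_matching} when $\lambda_2=1$, since then three legs of length $1$ appear); at $i\geq 3$ with $\lambda_i\geq 2$ apply Lemma~\ref{lem:quotient_construction_2}; and at $i\geq 3$ with $\lambda_i=1$, part (b) of Lemma~\ref{lem:count_vertices} forces $i<d-1$, giving at least three leaves and again the matching criterion. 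Note that Lemma~\ref{lem:quotient_construction_3} is not needed here (it handles spiders outside this theorem's hypothesis, and would anyway require verifying its quotient bound $q\geq\frac{\lambda_i+1}{t-1}$). Your pigeonhole observation is correct arithmetic but it is a different dichotomy from the one the toolkit consumes, so as written the residual case of your proof cannot be completed.
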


We show the utility of our results in Example~\ref{ex:gen_short_legs}, where we easily classify when a windmill graph $W^d_n$ for $d\geq1$, $n\geq 1$ is $e$-positive. In Section~\ref{sec:bipartite} we turn our attention to Schur-positivity, proving the following in Theorem~\ref{the:bipartite_s_pos}.

\begin{theorem*}
If $G$ is an $n$-vertex bipartite graph with a vertex of degree greater than $\lceil \frac{n}{2} \rceil$, then $G$ is not Schur-positive.
\end{theorem*}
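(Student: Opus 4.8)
The plan is to certify non-Schur-positivity by exhibiting a single partition $\lambda$ with a strictly negative Schur coefficient. Since the Schur functions are orthonormal for the Hall inner product $\langle\cdot,\cdot\rangle$, the coefficient of $s_\lambda$ in the chromatic symmetric function $X_G$ equals $\langle X_G,s_\lambda\rangle$, so it suffices to produce one $\lambda$ with $\langle X_G,s_\lambda\rangle<0$. I would search among the two-row shapes $(a,n-a)$ with $a\ge\lceil n/2\rceil$, since the degree hypothesis should force an imbalance that such shapes can detect.

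First I would combine the two-row Jacobi--Trudi identity $s_{(a,n-a)}=h_ah_{n-a}-h_{a+1}h_{n-a-1}$ with the duality $\langle X_G,h_\mu\rangle=[m_\mu]X_G$ to obtain
\[
\langle X_G,s_{(a,n-a)}\rangle=[m_{(a,n-a)}]X_G-[m_{(a+1,n-a-1)}]X_G .
\]
The monomial coefficient $[m_{(a,n-a)}]X_G$ counts proper colorings of $G$ using exactly two colors on $a$ and $n-a$ vertices, i.e.\ ordered partitions of $V(G)$ into two independent sets of those sizes. As $G$ is bipartite, such a $2$-coloring amounts to choosing, independently in each connected component $G_i$, which of its two sides gets the first color. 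Writing $(p_i,q_i)$ with $p_i\ge q_i$ for the bipartition sizes of $G_i$, this yields the generating-function identity
\[
\sum_{a}[m_{(a,n-a)}]X_G\,t^a=\prod_i\bigl(t^{p_i}+t^{q_i}\bigr)=:M(t),
\]
so that $\langle X_G,s_{(a,n-a)}\rangle=[t^a]M(t)-[t^{a+1}]M(t)$. Hence $X_G$ fails to be Schur-positive as soon as $M$ has an ascent $[t^a]M<[t^{a+1}]M$ at some $a\ge\lceil n/2\rceil$.

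I would then locate such an ascent using the high-degree vertex. Setting $\delta_i=p_i-q_i$ and pulling out the balanced components gives $M(t)=2^{z}t^{Q}P(t)$ with $P(t)=\prod_{\delta_i>0}(1+t^{\delta_i})$, $Q=\sum_i q_i$, and $z$ the number of components with $p_i=q_i$. Let $G_1$ (with $n_1$ vertices) be the component containing the vertex $v$ of degree $d>\lceil n/2\rceil$. All $d$ neighbors of $v$ lie on one side of $G_1$, so $p_1\ge d$ and $\delta_1=2p_1-n_1\ge 2d-n_1$, while $R:=\sum_{i\ge2}\delta_i\le n-n_1$. Therefore
\[
\delta_1-R\ge 2d-n\ge 2\bigl(\lceil n/2\rceil+1\bigr)-n\ge 2 .
\]
Thus $\delta_1\ge R+2=\deg\bigl(\prod_{i\ge2}(1+t^{\delta_i})\bigr)+2$, so in $P(t)=(1+t^{\delta_1})\prod_{i\ge2}(1+t^{\delta_i})$ the two summands occupy disjoint degree ranges separated by a gap: the exponent $\delta_1-1$ is absent from the support of $P$ while $\delta_1$ is present. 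This is exactly the ascent I need, located at $a=Q+\delta_1-1$, and the bound $\delta_1-R\ge2$ also forces $a\ge n/2$, so $(a,n-a)$ is a genuine partition and $\langle X_G,s_{(a,n-a)}\rangle<0$.

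The step I expect to be most delicate is checking that the located ascent really lies in the admissible range $a\ge\lceil n/2\rceil$, since this is precisely where the numerical force of the hypothesis $d>\lceil n/2\rceil$ is spent: it is the inequality $\delta_1-R\ge2$ that pushes $a$ up to at least $n/2$, and the boundary case $a=n/2$ (possible only for even $n$) must be verified to still give a strict negative sign via the balanced shape $(n/2,n/2)$, using that $[m_{(n/2,n/2)}]X_G=[t^{n/2}]M(t)$ with no stray factor. The remaining ingredients---the two-row Jacobi--Trudi identity, the duality between $\{h_\mu\}$ and $\{m_\mu\}$, and the bijective reading of the $2$-colorings of a bipartite graph---are routine, so the whole argument collapses to the elementary degree count above.
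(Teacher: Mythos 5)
Your proposal is correct, but it takes a genuinely different route from the paper's. The paper argues through stable partitions and dominance: in any proper $2$-colouring of the bipartite graph $G$, the $d > \lceil \frac{n}{2} \rceil$ neighbours of the high-degree vertex $v$ are monochromatic, so $G$ has a stable partition of type $(m,n-m)$ with $m > \lceil \frac{n}{2} \rceil$, while the same degree count shows $G$ is missing a stable partition of the dominated type $(\lceil \frac{n}{2} \rceil, \lfloor \frac{n}{2} \rfloor)$; non-Schur-positivity then follows by invoking Stanley's criterion (Theorem~\ref{thm:s_positivity_crit}) as a black box. You instead exhibit an explicit negative Schur coefficient: the two-row Jacobi--Trudi identity together with $h$--$m$ duality gives $\langle X_G, s_{(a,n-a)}\rangle = [t^a]M(t) - [t^{a+1}]M(t)$ for the bipartition-imbalance polynomial $M(t) = \prod_i \bigl(t^{p_i} + t^{q_i}\bigr)$, and your degree count correctly locates a support gap: with $n = 2Q + \delta_1 + R$ one gets $2a - n = \delta_1 - R - 2 \geq 0$ at $a = Q + \delta_1 - 1$, so $(a,n-a)$ is a genuine partition, $[t^a]M = 0$, $[t^{a+1}]M > 0$, and the coefficient of $s_{(a,n-a)}$ equals $-2^z [t^{\delta_1}]P < 0$; the boundary case $a = \frac{n}{2}$ is indeed harmless since each equal-class colouring contributes the monomial $x_1^{n/2}x_2^{n/2}$ exactly once to $m_{(n/2,\,n/2)}$. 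It is worth noting that your computation is, in disguise, a self-contained proof of precisely the two-row instance of Stanley's criterion that the paper cites: $[t^a]M = 0 < [t^{a+1}]M$ says $G$ has a stable partition of type $(a+1,n-a-1)$ but is missing one of the dominated type $(a,n-a)$. What each approach buys: the paper's is shorter and defers all symmetric-function work to the cited criterion, whereas yours avoids that citation entirely and extracts strictly more information --- the explicit shape carrying a negative coefficient, its value, and the reusable identity that every two-row Schur coefficient of a bipartite $X_G$ is a consecutive difference of $M(t)$. The only cosmetic caveats are that your generating-function identity should be read with the product taken over connected components (so that each component's bipartition is unique up to swap, isolated vertices contributing $t+1$) and with $[m_{(a,n-a)}]$ interpreted for $a \geq \lceil \frac{n}{2} \rceil$, both of which your argument respects where it matters.
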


Finally, in Section~\ref{sec:further} we conclude with two captivating conjectures on the $e$-positivity of trees.

\section{Background}\label{sec:background} In order to describe our results, let us first recall the necessary combinatorics and algebra. We say a \emph{partition} $\lambda = (\lambda _1, \ldots , \lambda _{\ell(\lambda)})$ of $N$, denoted by $\lambda \vdash N$, is a list of positive integers whose \emph{parts} $\lambda _i$ satisfy $\lambda _1 \geq \cdots \geq \lambda _{\ell(\lambda)}>0$ and {$\sum _{i=1} ^{\ell(\lambda)} \lambda _i=N$}. If we have $j$ parts equal to $i$ then we often denote this by $i^j$. Related to every partition $\lambda$ is its \emph{transpose}, $\lambda ^t = (\lambda _1^t, \ldots , \lambda ^t _{\lambda_1})$, which is the partition of $N$ obtained from $\lambda$ by setting
$$\lambda _i^t = \mbox{ number of parts of }\lambda \geq i.$$For example, if $\lambda =(2,2,1)$ then $\lambda ^t = (3,2)$. 

Given a graph $G$ with vertex set $V_G$ and edge set $E_G$, we say that $G$ is an \emph{$n$-vertex graph}, or has \emph{size} $n$, if $|V_G| =n$. We say that a connected graph $G$ contains a \emph{cut vertex} if there exists a vertex $v\in V_G$ such that the deletion of $v$ and its incident edges yields a graph $G'$ with more than one connected component. A \emph{connected partition} $C$ of an $n$-vertex graph $G$ is a partitioning of its vertex set $V_G$ into $\{V_1, \dots, V_k\}$ such that each induced subgraph formed by the vertices in each subset $V_i$ only is a connected graph. The \emph{type} of a connected partition $C$ is the partition of $n$ formed from sorting the sizes of each set $V_i$ in decreasing order.
We say $G$ \emph{has a connected partition} of type $\lambda$ if and only if there exists a connected partition of $G$ of type $\lambda$, and is \emph{missing a connected partition} of type $\lambda$ otherwise.

\begin{example}\label{ex:connpartition} Consider the $n$-vertex star $S_n$ for $n\geq 4$, consisting of a single vertex connected to $n-1$ vertices of degree 1. The star $S_4$ is below. 
$$
\centering
\begin{tikzpicture}[scale=0.5]
    \coordinate (A) at (0,0);
    \coordinate (B) at (1.5,0);
    \coordinate (C) at (3,0);
    \coordinate (D) at (1.5,1);
    \draw[thick] (A)--(D);
    \draw[thick] (B)--(D);
    \draw[thick] (C)--(D);
    \filldraw (A) circle (7pt);
    \filldraw (B) circle (7pt);
    \filldraw (C) circle (7pt);
    \filldraw (D) circle (7pt);
\end{tikzpicture}$$
The graph $S_n$ has a connected partition of type $\lambda$ if and only if $\lambda = (k, 1^{n-k})$ for some $1 \leq k \leq n$. Examples of connected partitions for $S_4$ of type $(4), (3,1), (2,1^2)$ and $(1^4)$ are below.
$$
\centering
\begin{tikzpicture}[scale=0.5]
    \coordinate (A) at (0,0);
    \coordinate (B) at (1.5,0);
    \coordinate (C) at (3,0);
    \coordinate (D) at (1.5,1);
    \draw[thick] (A)--(D);
    \draw[thick] (B)--(D);
    \draw[thick] (C)--(D);
    \filldraw (A) circle (7pt);
    \filldraw (B) circle (7pt);
    \filldraw (C) circle (7pt);
    \filldraw (D) circle (7pt);
\end{tikzpicture}
\hspace{1cm}
\begin{tikzpicture}[scale=0.5]
    \coordinate (A) at (0,0);
    \coordinate (B) at (1.5,0);
    \coordinate (C) at (3,0);
    \coordinate (D) at (1.5,1);
    \draw[thick] (A)--(D);
    \draw[thick] (B)--(D);
    \filldraw (A) circle (7pt);
    \filldraw (B) circle (7pt);
    \filldraw (C) circle (7pt);
    \filldraw (D) circle (7pt);
\end{tikzpicture}
\hspace{1cm}
\begin{tikzpicture}[scale=0.5]
    \coordinate (A) at (0,0);
    \coordinate (B) at (1.5,0);
    \coordinate (C) at (3,0);
    \coordinate (D) at (1.5,1);
    \draw[thick] (A)--(D);
    \filldraw (A) circle (7pt);
    \filldraw (B) circle (7pt);
    \filldraw (C) circle (7pt);
    \filldraw (D) circle (7pt);
\end{tikzpicture}
\hspace{1cm}
\begin{tikzpicture}[scale=0.5]
    \coordinate (A) at (0,0);
    \coordinate (B) at (1.5,0);
    \coordinate (C) at (3,0);
    \coordinate (D) at (1.5,1);
    \filldraw (A) circle (7pt);
    \filldraw (B) circle (7pt);
    \filldraw (C) circle (7pt);
    \filldraw (D) circle (7pt);
\end{tikzpicture}
$$
Thus $S_n$ is missing a connected partition of type $(n-2, 2)$ for $n\geq 4$. For example, $S_4$ is missing a connected partition of type $(2,2)$.
\end{example}

\begin{remark}\label{rem:claw} The star $S_4$ is also known as the claw. It is intimately connected to the aforementioned 1995 conjecture of Stanley \cite[Conjecture 5.1]{Stan95} since if a poset is $(3+1)$-free, then its incomparability graph is claw-free. In contrast, the graphs we will study are mostly not claw-free.
\end{remark}

We say that an $n$-vertex graph $G$ has a \emph{perfect matching} if it has a connected partition of type $(2^{\frac{n}{2}})$ and an \emph{almost} perfect matching if it has a connected partition of type $(2^{\frac{n-1}{2}},1)$. Classically stated, a graph $G$ has a perfect matching if there exists a subset of its edges $M\subseteq E_G$, such that every vertex in the graph is incident to exactly one edge in $M$. Similarly $G$ has an almost perfect matching if there exists a vertex $v\in V_G$ whose deletion, along with its incident edges, yields a graph $G'$ that has a perfect matching.

Graphs that will be of particular interest to us will be \emph{trees}, namely connected graphs with no cycles. Recall that degree 1 vertices in trees are called \emph{leaves}, and that a disjoint union of trees is called a \emph{forest}. Two types of tree that will be crucial to our results are paths and spiders. Recall that the \emph{path} $P_n$ of \emph{length} $n$ where $n\geq1$ is the $n$-vertex tree with $n-2$ vertices of degree 2, and 2 leaves, for $n\geq 2$ or a single vertex for $n=1$. Meanwhile, given a partition $\lambda = (\lambda _1, \ldots , \lambda _d) \vdash n-1$ where $d\geq 3$, the \emph{spider} $$S(\lambda) = S(\lambda _1, \ldots , \lambda _d)$$is the $n$-vertex tree consisting of $d$ disjoint paths $P_{\lambda _1}, \ldots , P_{\lambda _d}$ (each respectively called a \emph{leg} of \emph{length} $\lambda _i$ for $1\leq i \leq d$) and a vertex (called the \emph{centre}) joined to a leaf in each path. Extending this notation, $S(i,\lambda)$ is the $(n+i)$-vertex spider with legs of length $i, \lambda _1, \ldots , \lambda _d$.

\begin{example}\label{ex:spider} The $n$-vertex star $S_n$ for $n\geq4$ is also the spider $S(1^{n-1})$. The $7$-vertex spider $S(4,1,1)$ is below.
\begin{center}
\begin{tikzpicture}
\filldraw [black] (0,0) circle (4pt);
\filldraw [black] (1,1) circle (4pt);
\filldraw [black] (0,2) circle (4pt);
\filldraw [black] (2.5,1) circle (4pt);
\filldraw [black] (4,1) circle (4pt);
\filldraw [black] (5.5,1) circle (4pt);
\filldraw [black] (7,1) circle (4pt);
\draw[thick] (0,0)--(1,1);
\draw[thick] (0,2)--(1,1);
\draw[thick] (1,1)--(7,1);
\end{tikzpicture}
\end{center}
\end{example}

We now turn to the algebra we will need. The algebra of symmetric functions is a subalgebra of $\bQ [[ x_1, x_2, \ldots ]]$ that can be defined as follows. The \emph{$i$-th elementary symmetric function} $e_i$ for $i\geq 1$ is given by
$$e_i = \sum _{j_1<\cdots < j_i} x_{j_1}\cdots x_{j_i}$$and given a partition $\lambda = (\lambda _1,  \ldots , \lambda _{\ell(\lambda)})$  the \emph{elementary symmetric function} $e_\lambda$ is given by
$$e_\lambda = e_{\lambda _1} \cdots e_{\lambda _{\ell(\lambda)}}.$$The \emph{algebra of symmetric functions}, $\Lambda$, is then the graded algebra
$$\Lambda = \Lambda ^0 \oplus \Lambda ^1 \oplus \cdots$$where $\Lambda ^0 = \spam \{1\} = \bQ$ and for $N\geq 1$
$$\Lambda ^N = \spam \{e_\lambda \suchthat \lambda \vdash N\}.$$Moreover, the elementary symmetric functions form a basis for $\Lambda$. Perhaps the most studied basis of $\Lambda$ is the basis of Schur functions. For a partition $\lambda = (\lambda _1,  \ldots, \lambda _{\ell(\lambda)})$,   the \emph{Schur function} $s_\lambda $ is given by
\begin{equation}\label{eq:JT}
s_{\lambda }=\det \left( e_{\lambda ^t_i -i +j}\right) _{1\leq i,j \leq \lambda_1}
\end{equation}where if $\lambda ^t_{i}-i+j <0$ then $e _{\lambda ^t_{i}-i+j}=0$.

If a symmetric function can be written as a nonnegative linear combination of elementary symmetric functions then we say it is \emph{$e$-positive}, and likewise if a symmetric function can be written as a nonnegative linear combination of Schur functions then we say it is \emph{Schur-positive}. Although not clear from  \eqref{eq:JT}, it is a classical result that any $e$-positive symmetric function is Schur-positive, however Example~\ref{ex:S411} shows that the converse does not hold.

However, the symmetric functions that we will focus on will be the chromatic symmetric function of a graph, which is reliant on a graph that is \emph{finite} and \emph{simple} and we will assume that our graphs satisfy these properties from now on.
Given a graph, $G$, with vertex set $V_G$ a \emph{proper colouring} $\kappa$ of $G$ is a function
$$\kappa : V_G\rightarrow \{1,2,\ldots\}$$such that if $v_1, v_2 \in V_G$ are adjacent, then $\kappa(v_1)\neq \kappa(v_2)$. Then the chromatic symmetric function is defined as follows.

\begin{definition}\cite[Definition 2.1]{Stan95}\label{def:chromsym} For an $n$-vertex graph $G$ with vertex set $V_G=\{v_1, \ldots, v_n\}$, the \emph{chromatic symmetric function} is defined to be
$$X_G = \sum _\kappa x_{\kappa(v_1)}\cdots x_{\kappa(v_n)}$$
where the sum is over all proper colourings $\kappa$ of $G$. \end{definition}

For succinctness, if we say that a graph $G$ is $e$-positive or Schur-positive, then we mean that $X_G$ is $e$-positive or Schur-positive, respectively.

\begin{example}\label{ex:S411} The spider $S(4,1,1)$ from Example~\ref{ex:spider} is not $e$-positive, but is Schur-positive since
\begin{align*}
X_{S(4,1,1)}&=e_{(2^3,1)} + 4e_{(3,2,1^2)} - 3e_{(3,2^2)} + 10e_{(3^2,1)} \\
&+ 10e_{(4,2,1)}+ 17e_{(4, 3)} + 4e_{(5, 1^2)} + 3e_{(5, 2)} + 11e_{(6, 1)} + 7e_{(7)}\\
&=64s_{(1^7)} + 88s_{(2,1^5)} + 76s_{(2^2, 1^3)} + 57s_{(2^3,1)} + 36s_{(3,1^4)} \\
&+ 36s_{(3,2,1^2)} + 18s_{(3,2^2)} + 4s_{(3^2,1)} + 5s_{(4,1^3)} + 6s_{(4,2,1)} + s_{(4,3)}.
\end{align*}
\end{example} 

The following result gives us one way to test whether a graph is $e$-positive, and will be vital in many of our proofs.

\begin{theorem}\cite[Proposition 1.3.3]{Wolfgang} \label{the:e_positivity_crit}
If a connected $n$-vertex graph $G$ is $e$-positive, then $G$ has a connected partition of type $\mu$ for every partition $\mu \vdash n$.
\end{theorem}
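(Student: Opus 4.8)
The plan is to pass to the power-sum basis $\{p_\lambda\}$ of $\Lambda$, where the presence or absence of a connected partition of a prescribed type can be read off directly. I would begin from Stanley's expansion \cite{Stan95}
$$X_G = \sum_{S\subseteq E_G}(-1)^{|S|}p_{\lambda(S)},$$
where $\lambda(S)\vdash n$ records the sizes of the connected components of the spanning subgraph $(V_G,S)$. Grouping the subgraphs $S$ according to the set partition $\pi=\{B_1,\dots,B_k\}$ of $V_G$ into these components, one sees that each such $\pi$ is necessarily a connected partition, and that the subgraphs inducing a fixed $\pi$ are exactly the disjoint unions of one connected spanning subgraph chosen inside each $G[B_i]$. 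Writing $\sigma(H)=\sum_{F}(-1)^{|F|}$ for the signed count of connected spanning subgraphs $F$ of a connected graph $H$, this regrouping gives
$$[p_\mu]X_G=\sum_{\pi}\ \prod_{i=1}^{\ell(\mu)}\sigma(G[B_i]),$$
the sum running over all connected partitions $\pi$ of $G$ of type $\mu$.

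The heart of the argument, and the step I expect to be the main obstacle, is the classical fact that for a connected graph $H$ on $m$ vertices one has $\sigma(H)\neq 0$ with sign exactly $(-1)^{m-1}$; this is essentially the statement that the M\"obius function $\mu(\hat 0,\hat 1)$ of the (geometric) bond lattice of $H$ is nonzero and alternates in sign with the rank. Granting it, every factor $\sigma(G[B_i])$ carries sign $(-1)^{|B_i|-1}$, so each summand above has the common sign $\prod_i(-1)^{|B_i|-1}=(-1)^{n-\ell(\mu)}$ and is nonzero. Hence no cancellation occurs, and
$$[p_\mu]X_G\neq 0 \iff G\text{ has a connected partition of type }\mu,$$
with the coefficient having sign $(-1)^{n-\ell(\mu)}$ whenever it is nonzero.

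It remains to feed in $e$-positivity. Write $X_G=\sum_{\nu}c_\nu e_\nu$ with all $c_\nu\geq 0$. Expanding each factor of $e_\nu=\prod_j e_{\nu_j}$ by the standard identity $e_m=\sum_{\rho\vdash m}(-1)^{m-\ell(\rho)}z_\rho^{-1}p_\rho$ and multiplying, a short sign bookkeeping shows that every $p_\mu$ occurring in $e_\nu$ does so with coefficient of sign $(-1)^{n-\ell(\mu)}$, or else with coefficient zero. Thus in $[p_\mu]X_G=\sum_\nu c_\nu[p_\mu]e_\nu$ all nonzero terms share the sign $(-1)^{n-\ell(\mu)}$. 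Now, since $G$ is connected it admits the connected partition of type $(n)$, so by the detection criterion $[p_{(n)}]X_G\neq 0$; as $[p_{(n)}]e_\nu=0$ for every $\nu\neq(n)$, this forces $c_{(n)}>0$. Finally, because \emph{every} coefficient in $e_n=\sum_{\rho\vdash n}(-1)^{n-\ell(\rho)}z_\rho^{-1}p_\rho$ is nonzero, the term $c_{(n)}[p_\mu]e_n$ is a nonzero contribution of sign $(-1)^{n-\ell(\mu)}$ for every $\mu\vdash n$, and no other term can cancel it. Hence $[p_\mu]X_G\neq 0$ for all $\mu\vdash n$, and the detection criterion then yields a connected partition of every type $\mu$, as required.
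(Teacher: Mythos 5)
Your proof is correct, and a comparison with ``the paper's proof'' is moot here: the paper states this result as a citation to Wolfgang's thesis \cite{Wolfgang} and gives no argument of its own, so your write-up supplies the standard proof underlying the citation. Every step checks out. Regrouping Stanley's expansion $X_G=\sum_{S\subseteq E_G}(-1)^{|S|}p_{\lambda(S)}$ by the vertex partition induced by the components of $(V_G,S)$ is legitimate: each block $B_i$ carries a connected spanning subgraph of $G[B_i]$, so $\pi$ is a connected partition, and the fibre over $\pi$ is exactly the product of choices of connected spanning subgraphs in each block, giving $[p_\mu]X_G=\sum_\pi\prod_i\sigma(G[B_i])$; this is precisely Stanley's bond-lattice expansion \cite[Theorem 2.6]{Stan95}, with $\sigma(G[B_i])=\mu(\hat{0},\hat{1})$ in the bond lattice of $G[B_i]$. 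The ``main obstacle'' you flag is indeed classical: the bond lattice is the lattice of flats of the graphic matroid, hence geometric, and Rota's sign theorem gives $\sigma(H)\neq 0$ with sign $(-1)^{m-1}$, so all summands share the sign $(-1)^{n-\ell(\mu)}$ and $[p_\mu]X_G\neq 0$ exactly when a connected partition of type $\mu$ exists. Your sign bookkeeping for $[p_\mu]e_\nu$ is also right: concatenating the $p$-expansions of the factors $e_{\nu_j}$ yields only contributions of sign $(-1)^{n-\ell(\mu)}$, so under $e$-positivity no cancellation can occur in $[p_\mu]X_G=\sum_\nu c_\nu[p_\mu]e_\nu$. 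The pivot of the argument --- deducing $c_{(n)}>0$ from connectivity via $[p_{(n)}]X_G\neq 0$ and $[p_{(n)}]e_\nu=0$ for $\nu\neq(n)$, then using that every coefficient of $e_n=\sum_{\rho\vdash n}(-1)^{n-\ell(\rho)}z_\rho^{-1}p_\rho$ is nonzero --- is sound and is exactly what makes the conclusion hold for \emph{every} $\mu\vdash n$. As a bonus, your argument proves slightly more than the stated theorem: it gives the clean detection criterion that $[p_\mu]X_G\neq 0$ if and only if $G$ has a connected partition of type $\mu$, with sign $(-1)^{n-\ell(\mu)}$, which is a useful strengthening consistent with how the paper later exhibits explicit negative $e$-coefficients (cf.\ Remark~\ref{rem:short_legs}).
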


Hence, to prove that a graph $G$ is not $e$-positive, it suffices to find a partition $\mu$ such that $G$ is missing a partition of type $\mu$. In particular, we get the following. 

\begin{theorem}\label{the:perfect_matching}  {Let $G$ be an $n$-vertex connected graph. If $G$ has no perfect matching (if $n$ is even) or no almost perfect matching (if $n$ is odd), then $G$ is not $e$-positive.  In particular, let  $T$ be an $n$-vertex tree. If $T$ has no perfect matching (if $n$ is even) or no almost perfect matching (if $n$ is odd), then $T$ is not $e$-positive.}\end{theorem}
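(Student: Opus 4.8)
The plan is to apply Theorem~\ref{the:e_positivity_crit} directly, so the entire argument reduces to exhibiting a single partition type $\mu$ that $G$ cannot realize as a connected partition. Observe that having a perfect matching is precisely having a connected partition of type $(2^{n/2})$, and having an almost perfect matching is precisely having a connected partition of type $(2^{(n-1)/2},1)$, by the definitions given above. So if $n$ is even and $G$ has no perfect matching, then $G$ is missing a connected partition of type $\mu = (2^{n/2})$; if $n$ is odd and $G$ has no almost perfect matching, then $G$ is missing a connected partition of type $\mu = (2^{(n-1)/2},1)$.

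The key step is then just the contrapositive of Theorem~\ref{the:e_positivity_crit}: since an $e$-positive connected graph must have a connected partition of \emph{every} type $\mu \vdash n$, a connected graph that is missing even one such type cannot be $e$-positive. Taking $\mu$ to be the matching type identified above immediately yields that $G$ is not $e$-positive. I would first state the equivalence between matchings and the specific connected partition types, then invoke the theorem, splitting into the two parity cases only to name the correct $\mu$.

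For the ``in particular'' clause about trees, the work is essentially already done, since a tree is a connected graph, and so the general statement applies verbatim with $G = T$. The only thing worth remarking is that the hypothesis is non-vacuous and natural for trees: a tree on an even number of vertices need not have a perfect matching (for instance the claw $S_4$ has none), so the criterion genuinely rules out $e$-positivity for a broad family, which is the point being emphasized.

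I do not anticipate a serious obstacle here, as the statement is a clean corollary of Theorem~\ref{the:e_positivity_crit} once the matching-to-partition-type dictionary is made explicit. The only point requiring any care is confirming that a perfect or almost perfect matching is genuinely the \emph{same data} as a connected partition of the stated type, namely that each block of size $2$ is automatically connected (its two vertices must be joined by an edge to be a connected induced subgraph, which is exactly an edge of the matching) and each block of size $1$ is trivially connected; this is immediate but should be stated so the reduction to Theorem~\ref{the:e_positivity_crit} is airtight.
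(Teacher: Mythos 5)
Your proof is correct and follows exactly the paper's (implicit) argument: the paper defines perfect and almost perfect matchings as connected partitions of type $(2^{\frac{n}{2}})$ and $(2^{\frac{n-1}{2}},1)$ respectively, and derives the theorem immediately from the contrapositive of Theorem~\ref{the:e_positivity_crit}, just as you do. Your extra remark verifying that blocks of size $2$ are connected precisely when they are edges is a fine point of care but adds nothing beyond the paper's route.
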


 {It is a well-known result that if a connected graph with an even number of vertices is claw-free, then it has a perfect matching. This immediately yields the following corollary to Theorem~\ref{the:perfect_matching}.}

\begin{corollary}\label{cor:clawsandmatching}  {Let $G$ be a connected graph with an even number of vertices and no perfect matching. Then $G$ contains the claw and is not $e$-positive.}
\end{corollary}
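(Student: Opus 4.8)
The plan is to combine two ingredients, both already available: the well-known structural result on claw-free graphs quoted immediately before the statement, and Theorem~\ref{the:perfect_matching}. Since both conclusions of the corollary (claw-containment and failure of $e$-positivity) can be read off directly, the proof should require essentially no computation, and I would present it as a short deduction.

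First I would establish that $G$ contains the claw, using the contrapositive of the quoted fact (due independently to Sumner and to Las Vergnas): every connected claw-free graph on an even number of vertices has a perfect matching. By hypothesis $G$ is connected, has an even number of vertices, and has no perfect matching, so $G$ cannot be claw-free. Recalling from Remark~\ref{rem:claw} that the claw is the star $S_4$, a graph fails to be claw-free precisely when it contains the claw, and hence $G$ contains the claw.

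Second, for the failure of $e$-positivity, I would apply Theorem~\ref{the:perfect_matching} directly. The hypotheses of the corollary, namely that $G$ is a connected graph on an even number of vertices with no perfect matching, are exactly the even-order hypotheses of that theorem, so we conclude immediately that $G$ is not $e$-positive.

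The only point requiring care, rather than a genuine obstacle, is to invoke the quoted claw-free/matching result in the correct logical direction and with the matching parity hypothesis (even order, perfect rather than almost perfect matching); once this is pinned down, both assertions of the corollary follow at once from the two cited results.
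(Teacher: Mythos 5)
Your proposal is correct and matches the paper's own reasoning exactly: the paper derives the corollary immediately from the quoted fact that connected claw-free graphs of even order have perfect matchings (applied in the contrapositive, as you do) together with Theorem~\ref{the:perfect_matching}. Nothing further is needed.
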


\begin{remark}\label{rem:clawsandmatching}  {Note that Corollary~\ref{cor:clawsandmatching} cannot be strengthened further regarding graphs that contain the claw since $S(2,1,1)$ and $S(6,2,1)$ both contain the claw and are $e$-positive. However, $S(2,1,1)$ has an odd number of vertices, and $S(6,2,1)$ has a perfect matching.}
\end{remark}

It is also known when a tree has a perfect matching by the following specialization of Tutte's Theorem on graphs with perfect matchings \cite{Tutte}.

\begin{lemma}\label{lem:perfect_matching} Let $T$ be a tree. Then $T$ has a perfect matching if and only if for every vertex $v$, the deletion of $v$ and its incident edges produces a forest with exactly one connected component with an odd number of vertices.
\end{lemma}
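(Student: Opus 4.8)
The plan is to prove the two directions separately, with the reverse implication carried by induction on the number of vertices. The forward direction is a direct analysis of where a fixed vertex's partner sits, while the reverse direction is where the work lies.

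For the forward direction, suppose $T$ has a perfect matching $M$ and fix a vertex $v$ with neighbours $u_1,\dots,u_k$, so that the components of $T-v$ are the subtrees $C_1,\dots,C_k$ with $u_i\in C_i$. Exactly one edge of $M$ is incident to $v$, say $vu_1$. Then $M$ restricted to each $C_j$ with $j\neq 1$ is a perfect matching of $C_j$, forcing $|C_j|$ even, whereas $M$ restricted to $C_1$ covers every vertex of $C_1$ except $u_1$, forcing $|C_1|$ odd. Hence $T-v$ has exactly one odd component. (Alternatively, one could invoke Tutte's theorem with $S=\{v\}$ to get at most one odd component, and then note that $|V_T|$ is even so $T-v$ has oddly many vertices and therefore a positive, hence equal to one, number of odd components.)

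For the reverse direction I would assume that for every vertex $v$ the forest $T-v$ has exactly one odd component, and argue by induction on $n=|V_T|$, the case $n=2$ being immediate. Pick a leaf $\ell$ with unique neighbour $u$. Since $\ell$ is isolated in $T-u$, the singleton $\{\ell\}$ is an odd component of $T-u$; by hypothesis it is the only one, so every other component $D_1,\dots,D_m$ of $T-u$ has even size. The idea is to place the edge $\ell u$ in the matching and to show that each $D_i$ again satisfies the hypothesis, so that induction supplies a perfect matching of each $D_i$; these, together with $\ell u$, cover all of $V_T$ and give a perfect matching of $T$.

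The crux is verifying the hypothesis for a fixed $D_i$. Let $u_i$ be the neighbour of $u$ lying in $D_i$ and take any $w\in D_i$. Deleting $w$ from $T$ leaves every component of $D_i-w$ avoiding $u_i$ untouched, while the component of $D_i-w$ containing $u_i$ (or nothing, when $w=u_i$) is merged with $\{u,\ell\}\cup\bigcup_{j\neq i}D_j$ into a single component $X$. Because $|\{u,\ell\}\cup\bigcup_{j\neq i}D_j| = 2+\sum_{j\neq i}|D_j|$ is even, this merge preserves the parity of the merged piece and creates no new components, so $T-w$ and $D_i-w$ have the same number of odd components. Thus $D_i-w$ has exactly one odd component for every $w\in D_i$, completing the induction. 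I expect this parity bookkeeping — confirming that attaching an even-sized block leaves the odd-component count unchanged — to be the only delicate point; everything else is a straightforward unwinding of the tree structure.
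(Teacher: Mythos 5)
Your proof is correct. It is, however, a genuinely different route from the paper's: the paper gives no proof of this lemma at all, stating it as a specialization of Tutte's theorem and citing \cite{Tutte}, whereas you give a self-contained elementary argument. Both of your directions check out: in the forward direction, restricting the perfect matching $M$ to each component of $T-v$ correctly forces $|C_1|$ odd and $|C_j|$ even for $j \neq 1$; in the reverse direction, the crucial parity bookkeeping is exactly right --- since $|\{u,\ell\}\cup\bigcup_{j\neq i}D_j| = 2+\sum_{j\neq i}|D_j|$ is even, merging it with the $u_i$-component of $D_i - w$ (or with nothing when $w = u_i$) leaves the odd-component count unchanged, so each even component $D_i$ of $T-u$ inherits the hypothesis and the strong induction on $|V_T|$ closes, with the matchings of the $D_i$ assembling with the edge $\ell u$ as claimed. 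Comparing the two approaches: the paper's citation buys brevity but leans on a much stronger black box, and in fact deducing the ``if'' direction from Tutte's theorem is not a pure specialization --- one must verify Tutte's condition for \emph{all} vertex subsets $S$ from the singleton condition, and the standard way to do this for trees is essentially the leaf induction you carried out. Your route buys self-containedness, and as a byproduct it shows the matching is forced edge-by-edge (every leaf edge must be used), which gives the well-known fact that a tree has at most one perfect matching, though you do not need or claim this.
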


As an example, we use the two theorems above to test the star to see in two ways that it is not $e$-positive.

\begin{example}\label{ex:perfect_matching} The $n$-vertex star $S_n$ for $n\geq4$ is not $e$-positive since it is missing a connected partition of type $(n-2,2)$ by Example~\ref{ex:connpartition}. 
It is also not $e$-positive since it is missing a connected partition of type $(2^{\frac{n}{2}})$ for $n$ even and $(2^{\frac{n-1}{2}},1)$ for $n$ odd. \end{example}

Note, however, that the converse of Theorem~\ref{the:e_positivity_crit} and of Theorem~\ref{the:perfect_matching} is false since the spider $S(4,1,1)$ is not $e$-positive by Example~\ref{ex:S411} and yet has a connected partition of every type.

\subsection{The reduction to spiders}\label{subsec:redspiders} The next two lemmas allow us to reduce our study of connected graphs to the study of spiders. For ease of notation in the proof of the next lemma, in the $i^{th}$ leg of a spider, which has $\lambda_i$ vertices, label the vertices by $\{s_{i,1}, \dots, s_{i,\lambda_i}\}$ where $s_{i,1}$ is the vertex connected to the centre, $s_{i,\lambda_i}$ is a leaf, and there are edges between each $s_{i,j}$ and $s_{i,j+1}$ for $1\leq j \leq \lambda _i -1$. 

\begin{lemma}\label{lem:spider_lem}
Let $T$ be a tree with a vertex of degree $d \geq 3$, and let $v$ be any such vertex. Let $(t_1, \dots, t_d)$ be the partition whose parts denote the sizes of the subtrees $(T_1, \dots, T_d)$ rooted at each vertex adjacent to $v$. If $T$ has a connected partition of type $\mu$, then the spider $S = S(t_1, \dots, t_d)$ has a connected partition of type $\mu$ as well. 
\end{lemma}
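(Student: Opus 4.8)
The plan is to show that any connected partition of $T$ can be converted, block by block, into a connected partition of the spider $S$ with exactly the same multiset of block sizes. The whole argument rests on two ideas: that the degree-$d$ vertex $v$ is a cut vertex in $T$, which rigidly constrains how blocks can sit across the subtrees $T_1, \ldots, T_d$, and that each leg of $S$ is a path, which can be cut into consecutive pieces of any prescribed sizes.

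First I would analyse the structure of a connected partition $C = \{V_1, \ldots, V_k\}$ of $T$ relative to $v$. Let $r_i$ denote the neighbour of $v$ lying in $T_i$, that is, the root of $T_i$. Since $T$ is a tree and $v$ separates $T_1, \ldots, T_d$, the unique path in $T$ between two vertices of a fixed $T_i$ stays inside $T_i$ (otherwise it would visit $v$ twice), while any path between vertices of distinct subtrees must pass through $v$. Combined with the fact that each block induces a connected subgraph, this yields two facts: (i) every block not containing $v$ is entirely contained in a single subtree $T_i$; and (ii) the block $V_0$ containing $v$ meets each $T_i$ in a connected piece $V_0 \cap T_i$ which, when nonempty, contains $r_i$. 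Writing $a_i = |V_0 \cap T_i|$ (so $a_i = 0$ exactly when $V_0$ avoids $T_i$), I record for each $i$ the multiset $M_i$ of sizes of the blocks of $C$ lying inside $T_i$, together with the distinguished \emph{centre size} $a_i$; note that $|V_0| = 1 + \sum_i a_i$ and that the entries of $M_i$, together with $a_i$ when positive, sum to $t_i$.

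Next I would build the partition of $S$. The key observation is that a path $P_{t_i}$ can be split into consecutive sub-paths realizing any multiset of part sizes summing to $t_i$, with a chosen part placed at either end. So in leg $i$ of $S$, which is the path $s_{i,1}, \ldots, s_{i,t_i}$, I lay out consecutive sub-paths matching exactly the piece sizes recorded for $T_i$, and when $a_i > 0$ I place the part of size $a_i$ at the centre end, occupying $s_{i,1}, \ldots, s_{i,a_i}$. Taking the centre block of $S$ to be $\{v\}$ together with all these centre-end pieces produces a connected set of size $1 + \sum_i a_i = |V_0|$, since $v$ is adjacent to $s_{i,1}$ in each leg; every remaining sub-path is connected by construction. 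This is therefore a connected partition of $S$, and its multiset of block sizes is $\{|V_0|\} \cup \bigcup_i M_i$, which is identical to that of $C$. Hence $S$ has a connected partition of type $\mu$.

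The routine but essential technical work is proving facts (i) and (ii) about the shape of blocks relative to the cut vertex; once the path-flexibility observation is in hand, the size bookkeeping is immediate. I expect the only real subtlety to be tracking the distinguished centre piece $a_i$ correctly, so that the reconstructed centre block of $S$ has the same size as $V_0$, rather than any genuine difficulty.
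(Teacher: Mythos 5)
Your proposal is correct and follows essentially the same route as the paper's own proof: identify the block containing $v$, record how many of its vertices lie in each subtree $T_i$ (your $a_i$ are the paper's $n_i$), transplant that block to the centre plus initial segments of the legs, and use the fact that a path admits a connected partition of any type to realize the remaining blocks within each leg. The only difference is cosmetic: you additionally verify that $V_0\cap T_i$ is connected and contains the root $r_i$, which the argument does not actually need, since only the counts $a_i$ matter.
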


\begin{proof}
Let $C = \{V_1, \dots, V_k\}$ be a connected partition of $T$ of type $\mu$. {We will work towards constructing a connected partition of $S$ of type $\mu$.} Without loss of generality, suppose $v \in V_1$. Since $T$ is a tree, no subset in $C$ contains vertices from two different subtrees unless vertex $v$ is also included. Therefore, all subsets in $C$ except possibly $V_1$ contain vertices from one subtree only. Hence, let $V_1$ contains $n_i$ vertices from subtree $T_i$ for $1 \leq i \leq d$, and let $\mathcal{T}_i \subset C$ denote the sets in the partition with vertices in subtree $T_i$ only. Notice that each $\mathcal{T}_i$ contains $t_i - n_i$ vertices. 

A connected partition $\{W_1, \dots, W_k\}$ of the same type $\mu$ in $S$ may now be formed as follows. Let $W_1 = \{v\} \cup \{s_{i,j}: 1 \leq i \leq d, 1 \leq j \leq n_i\}$ where $v$ is the centre of $S$. Note that {$|W_1|=|V_1|$ and $W_1$ is connected.} Now notice that $S$ with deletion of all vertices in $W_1$ and their incident edges is now a collection of $d$ disjoint paths of length $t_i - n_i$ for $1 \leq i \leq d$. For each $\mathcal{T}_i$, let $\nu_i \vdash (t_i - n_i)$ be the partition formed from the size of each set in $\mathcal{T}_i$. Since a path may be decomposed into a connected partition of any type, in particular, a connected partition of type $\nu_i$ can be formed from a path of length $t_i - n_i$. Hence, the result follows.
\end{proof}

In fact, the above argument can be generalized as follows.

\begin{lemma}\label{lem:gen_spider_lem}
Let $G$ be a connected graph with a cut vertex $v$  whose deletion produces a graph with connected components $(C_1, \dots, C_d)$ with $d \geq 3$. Let $(c_1, \dots, c_d)$ be the partition whose parts denote the sizes of each of these connected components. If $G$ has a connected partition of type $\mu$, then the spider $S = S(c_1, \dots, c_d)$ has a connected partition of type $\mu$ as well. 
\end{lemma}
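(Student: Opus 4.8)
The plan is to follow the proof of Lemma~\ref{lem:spider_lem} almost verbatim, replacing the tree-specific observation about subtrees with the corresponding statement for a cut vertex. So I would start with a connected partition $C=\{V_1,\dots,V_k\}$ of $G$ of type $\mu$ and, without loss of generality, assume that the cut vertex lies in $V_1$; I will denote this vertex by $v$ and use it to play the role of the centre of $S=S(c_1,\dots,c_d)$. The structural fact I need is that every part $V_j$ with $j\neq 1$ is contained entirely within a single component $C_i$. This is exactly where the cut-vertex hypothesis enters in place of the tree hypothesis: since $v$ separates the $C_i$ from one another, any path in $G$ joining a vertex of $C_i$ to a vertex of $C_{i'}$ with $i\neq i'$ must pass through $v$, so a connected vertex set that avoids $v$ cannot meet two distinct components. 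Hence, as before, only $V_1$ can spread across several components.

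Next I would set $n_i = |V_1 \cap C_i|$ for $1\leq i\leq d$, so that $|V_1| = 1 + \sum_{i=1}^d n_i$, and build the analogue of $V_1$ in the spider by taking a prefix of each leg: let $W_1 = \{v\} \cup \{s_{i,j} : 1\leq i\leq d,\ 1\leq j\leq n_i\}$, which is connected because each $\{s_{i,1},\dots,s_{i,n_i}\}$ is a path attached to the centre $v$, and which satisfies $|W_1| = |V_1|$. Deleting $W_1$ from $S$ leaves $d$ disjoint paths of lengths $c_i - n_i$. For each $i$, the parts of $C$ lying inside $C_i$, say $\mathcal{T}_i$, partition the $c_i - n_i$ vertices of $C_i \setminus V_1$ and thereby determine a partition $\nu_i \vdash (c_i - n_i)$; since a path admits a connected partition of every type, I can realize $\nu_i$ on the residual path of leg $i$. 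Combining these realizations with $W_1$ produces a connected partition of $S$ whose multiset of part sizes is $\{|V_1|\} \cup \bigcup_{i=1}^d \nu_i = \mu$, so it has type $\mu$, as required.

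The only genuinely new content beyond Lemma~\ref{lem:spider_lem} is the separation argument above, so I expect that to be the main obstacle, and it is mild. There is, however, one subtlety worth flagging explicitly: in a general graph the cut vertex $v$ may be adjacent to several vertices of a single component $C_i$, so the induced subgraph on $V_1 \cap C_i$ need not be connected on its own (unlike in the tree case, where $v$ meets each subtree in exactly one vertex). This causes no difficulty, since the construction uses only the cardinalities $n_i$, and in the spider the prefix $\{s_{i,1},\dots,s_{i,n_i}\}$ together with the centre is automatically connected regardless of how $V_1$ met $C_i$ in $G$. Thus the argument of Lemma~\ref{lem:spider_lem} transfers intact, and the lemma follows.
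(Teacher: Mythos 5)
Your proof is correct and takes essentially the same approach as the paper: the paper likewise observes that since $v$ is a cut vertex there are no edges between distinct components $C_i$, so every part of the connected partition other than the one containing $v$ lies in a single component, and then reduces to the construction of Lemma~\ref{lem:spider_lem}. The subtlety you flag (that $V_1 \cap C_i$ need not induce a connected subgraph) is harmless for exactly the reason you give, since the transfer to the spider uses only the cardinalities $n_i$.
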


\begin{proof}
Suppose $C$ is a connected partition of $G$ of type $\mu$. By assumption that $v$ is a cut vertex, any path between $w, u$ in distinct connected components $C_i$ and $C_j$ passes through $v$, for otherwise the deletion of $v$ would not leave $C_i$ and $C_j$ as distinct connected components. Hence, there are no edges between any distinct $C_i$ and $C_j$. The proof then proceeds as before in Lemma~\ref{lem:spider_lem} as $C$ contains exactly one set $V_1$ including the vertex $v$ and possibly some other vertices in $(C_1, \dots, C_d)$, and every other set $V_i$ in $C$ contains vertices from exactly one connected component $C_j$.    
\end{proof}

\section{The $e$-positivity of spiders}\label{sec:spiders} We now work towards our first result on spiders by classifying when they have perfect and almost perfect matchings, for which we will need the following straightforward observation.

\begin{observation}\label{obs:matchedpaths} A path, $P_n$, has a perfect matching if and only if $n$ is even.
\end{observation}

With this observation we can now classify when a spider has a perfect or almost perfect matching.

\begin{lemma}\label{lem:spider_matching} We have the following.
\begin{enumerate}[(a)]
\item A spider has a perfect matching if and only if it has exactly one leg of odd length. 
\item A spider has an almost perfect matching if and only if it has zero or two legs of odd length. 
\end{enumerate}
\end{lemma}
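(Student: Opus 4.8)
The plan is to reduce both statements to Observation~\ref{obs:matchedpaths} by analyzing what happens at the centre vertex. Let $S = S(\lambda_1, \ldots, \lambda_d)$ be a spider with $d \geq 3$ legs, and let $c$ denote its centre. The key structural fact is that in any matching of $S$, the centre $c$ is either matched to a vertex $s_{i,1}$ on exactly one leg $i$, or left unmatched; removing the matched edge (or nothing) splits the remaining graph into a disjoint union of paths whose matching behaviour is governed entirely by parity. I will treat the two parts separately but with the same bookkeeping.

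For part (a), I would argue as follows. Suppose $c$ is matched along leg $i$ to $s_{i,1}$. Then deleting $c$ and $s_{i,1}$ leaves leg $i$ as a path $P_{\lambda_i - 1}$ and every other leg $j \neq i$ as a path $P_{\lambda_j}$, all disjoint. A perfect matching of $S$ exists through this choice precisely when each of these paths has a perfect matching, which by Observation~\ref{obs:matchedpaths} means $\lambda_i - 1$ is even and every $\lambda_j$ ($j \neq i$) is even; equivalently, leg $i$ has odd length and all other legs have even length. So a perfect matching exists iff there is some leg $i$ such that $\lambda_i$ is odd and all remaining legs are even, which is exactly the condition that $S$ has exactly one leg of odd length. (The alternative, that $c$ is unmatched, is impossible in a perfect matching, so this case analysis is complete.) I would also note the total parity check: $n = 1 + \sum \lambda_j$ must be even for a perfect matching to exist, and having exactly one odd leg makes $\sum \lambda_j$ odd, hence $n$ even, which is consistent.

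For part (b), an almost perfect matching leaves exactly one vertex unmatched, so I consider two scenarios. If $c$ is the unmatched vertex, then all $d$ legs must be perfectly matched as paths $P_{\lambda_j}$, forcing every $\lambda_j$ to be even, i.e.\ zero legs of odd length. If instead $c$ is matched (say along leg $i$ to $s_{i,1}$) and the unmatched vertex lies in some leg, then after deleting $c$ and $s_{i,1}$ we need the disjoint paths $P_{\lambda_i - 1}$ and $\{P_{\lambda_j} : j \neq i\}$ to admit a matching missing exactly one vertex total; since a path $P_m$ has a perfect matching iff $m$ is even and otherwise its maximum matching misses exactly one vertex, exactly one of these paths must have odd order and the rest even order. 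Tracking parities, this forces exactly one odd leg among the legs other than $i$ together with $\lambda_i$ even, or $\lambda_i - 1$ odd (i.e.\ $\lambda_i$ even) paired with all $\lambda_j$ even — I would carefully enumerate these and show the net effect is that $S$ has exactly two legs of odd length. Combining the two scenarios gives zero or two odd legs, and conversely I would verify each of those configurations produces an almost perfect matching by exhibiting the matching explicitly.

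The main obstacle I anticipate is the case bookkeeping in part (b): the unmatched vertex can be the centre or can sit on a leg, and when it sits on a leg one must correctly account for the parity shift caused by the centre edge consuming $s_{i,1}$. The cleanest way to avoid errors is the parity invariant $n = 1 + \sum_j \lambda_j$ combined with the observation that the number of unmatched vertices of a disjoint union of paths equals the number of odd-order components; once this is phrased as ``the number of unmatched vertices in $S$ under a best matching depends only on the multiset of leg-length parities,'' both parts follow by counting odd components after removing the centre (and its possible partner), so I would prove that counting statement once and apply it twice.
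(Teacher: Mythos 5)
Your proposal is correct, and it takes a genuinely different route from the paper's, most notably in part (b). For (a), the paper gets the forward direction from the Tutte specialization (Lemma~\ref{lem:perfect_matching}) --- delete the centre and observe that exactly one resulting path must have odd order --- whereas you argue directly on how the centre is matched; both work, and yours avoids invoking Tutte at the cost of redoing a small piece of matching theory by hand. The real divergence is in (b): the paper fixes the deleted vertex $v'$ and analyses the residual spider $S'$, which forces an awkward special case (centre of degree $3$ with $v'$ adjacent to it) and some delicate parity bookkeeping about how many vertices were removed from the leg $L$; you instead fix the matching status of the centre and invoke the single counting fact that a maximum matching of a disjoint union of paths misses exactly as many vertices as there are odd-order components. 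That one invariant absorbs all of the paper's case distinctions, including the degree-$3$ special case, since ``the unmatched vertex lies on the leg the centre is matched into'' is just the case where $P_{\lambda_i - 1}$ is the unique odd-order component. What your approach buys is a shorter, more mechanical, and less error-prone case analysis; what the paper's buys is that its auxiliary tools (Lemma~\ref{lem:perfect_matching} and part (a) itself, reused to handle $S'$) are already on hand and it never needs to state the odd-components counting lemma.

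One slip to fix when you write the argument out: in your enumeration for (b) with the centre matched into leg $i$, the option ``exactly one odd leg among the legs other than $i$ together with $\lambda_i$ even'' is impossible --- that configuration has \emph{two} odd-order components, namely $P_{\lambda_j}$ for the odd $j$ and $P_{\lambda_i - 1}$. The two admissible options are: $\lambda_i$ odd with exactly one odd $\lambda_j$, $j \neq i$ (so $S$ has two odd legs), or $\lambda_i$ even with every $\lambda_j$, $j \neq i$, even (so $S$ has zero odd legs); combined with the centre-unmatched scenario this yields exactly ``zero or two odd legs,'' as you correctly conclude. Applying your counting lemma literally makes this enumeration automatic, so the slip is harmless; just also record why exactly one odd-order component is both necessary and sufficient for missing exactly one vertex (zero odd components gives even total order, so an odd number of missed vertices is impossible, while $k \geq 2$ odd components force at least $k$ missed vertices).
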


\begin{proof}
Suppose a spider $S$ has a perfect matching. The deletion of the centre of $S$ produces a union of disjoint paths, and exactly one of these paths must have odd length by Lemma~\ref{lem:perfect_matching}. Conversely, if $S$ has exactly one leg of odd length, by matching the centre to its neighbour in that leg, a perfect matching exists by Observation~\ref{obs:matchedpaths} since the deletion of these vertices and their incident edges then produces a set of disjoint paths, each of even length.

Now suppose a spider $S$ has an almost perfect matching {so that there exists a vertex $v'$ whose deletion results in a perfect matching}. If the deletion of the centre produces a graph with a perfect matching, all legs in $S$ have even length, by Observation~\ref{obs:matchedpaths}. Otherwise, {$v'$} was in one of the legs $L$. Hence, unless the centre had degree 3 and $v'$ was adjacent to it, the deletion of $v'$ produces a spider $S'$ with an even number of vertices and a path of even length, which may {have 0 vertices}. Notice that an odd number of vertices in total was deleted from $S$ to form $S'$. 

Hence, if the leg $L$ in $S'$ now has odd length, {then} every other leg in $S$ had even length by the first part. Therefore, all legs in $S$ had even length as an odd number of vertices was deleted from $L$ to form $S'$. Otherwise, if the leg $L$ in $S'$ now has even length, then some other leg in $S'$ has odd length by the first part, and so as an odd number of vertices was deleted from $L$ to form $S'$,  $S$ originally had 2 legs of odd length. 

In the case where $v'$ is adjacent to a degree 3 centre, the deletion of $v'$ produces two disjoint paths of even length. Hence, in this case, $S$ had exactly 2 legs of odd length. {This is because one of the paths of even length must consist of 2 legs and the centre, so one of the legs must be of odd length. Furthermore, the other path of even length must be a path of length one less than the remaining third leg, due to the deletion of $v'$, so the third leg must be of odd length.}

For the converse, if $S$ has no legs of odd length, then the deletion of the centre produces a disjoint union of paths with even length, which has a perfect matching by Observation~\ref{obs:matchedpaths}. If $S$ has 2 legs of odd length, the the deletion of the leaf in exactly one of those legs produces a spider with exactly one leg of odd length, which has a perfect matching by the first part.
\end{proof}

\begin{corollary}\label{cor:matching_cor}
Every spider with at least 3 legs of odd length is not $e$-positive.
\end{corollary}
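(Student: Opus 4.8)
The corollary to be proven is:

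\begin{proof}
The plan is to combine Lemma~\ref{lem:spider_matching} with Theorem~\ref{the:perfect_matching}, so the entire argument reduces to a short parity bookkeeping. Suppose $S$ is a spider with at least $3$ legs of odd length, and let $n$ denote its number of vertices. I want to show that $S$ has neither a perfect matching (if $n$ is even) nor an almost perfect matching (if $n$ is odd); then Theorem~\ref{the:perfect_matching} immediately gives that $S$ is not $e$-positive.

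First I would rule out a perfect matching. By Lemma~\ref{lem:spider_matching}(a), a spider has a perfect matching if and only if it has \emph{exactly one} leg of odd length. Since by hypothesis $S$ has at least $3$ legs of odd length, it does not have exactly one, and hence $S$ has no perfect matching. Next I would rule out an almost perfect matching. By Lemma~\ref{lem:spider_matching}(b), a spider has an almost perfect matching if and only if it has \emph{zero or two} legs of odd length; again, having at least $3$ legs of odd length excludes both possibilities, so $S$ has no almost perfect matching either.

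Thus, regardless of the parity of $n$, the spider $S$ fails to have the relevant (perfect or almost perfect) matching, and Theorem~\ref{the:perfect_matching} applies to conclude that $S$ is not $e$-positive.
\end{proof}

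I do not expect any genuine obstacle here: the content is entirely front-loaded into Lemma~\ref{lem:spider_matching} and Theorem~\ref{the:perfect_matching}, and the corollary is just a case-free application of the iff-criteria for matchings. The only point requiring the slightest care is observing that the single hypothesis ``at least $3$ legs of odd length'' simultaneously contradicts the condition for a perfect matching (exactly one odd leg) and the condition for an almost perfect matching (zero or two odd legs), so that both parity cases for $n$ are covered at once without needing to track $n \bmod 2$ explicitly.
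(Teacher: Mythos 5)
Your proof is correct and follows exactly the paper's route: the paper likewise deduces the corollary immediately from Lemma~\ref{lem:spider_matching} and Theorem~\ref{the:perfect_matching}, merely adding that the missing connected partition is of type $(2^{\frac{n}{2}})$ or $(2^{\frac{n-1}{2}},1)$ according to the parity of $n$. Your observation that the hypothesis of at least $3$ odd legs simultaneously rules out ``exactly one'' and ``zero or two'' odd legs, so both parity cases are dispatched at once, is a clean way to present the same argument.
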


\begin{proof}
This follows immediately from Lemma~\ref{lem:spider_matching} and Theorem~\ref{the:perfect_matching}. In particular, if we have an $n$-vertex spider, then a connected partition of type $(2^{\frac{n}{2}})$ for $n$ even or $(2^{\frac{n-1}{2}},1)$ for $n$ odd, is missing.
\end{proof}

\begin{example} \label{ex:threeoddlegs} All spiders $S(\lambda _1, \lambda _2, \lambda _3, 1,1,1)$ are not $e$-positive. \end{example}

However, it is not only the parity of the legs that can determine the $e$-positivity of a spider, but also the length of the legs. 

\begin{lemma}\label{lem:short_legs} Let $\lambda \vdash (n-1)$ have at least 3 parts and $m$ be the maximum of the parts of $\lambda$. If $m < \lfloor \frac{n}{2} \rfloor$, then the $n$-vertex spider $S(\lambda)$ is missing a connected partition of type $$(n-m-1, m+1)$$and hence is not $e$-positive. 
\end{lemma}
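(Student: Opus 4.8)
The plan is to directly show that $S(\lambda)$ has no connected partition into exactly two blocks of sizes $n-m-1$ and $m+1$, and then to invoke the contrapositive of Theorem~\ref{the:e_positivity_crit}. So the whole argument reduces to one missing type, together with a single connectivity observation about spiders.

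First I would record that $(n-m-1,m+1)$ genuinely is a partition of $n$ and extract the one arithmetic consequence of the hypothesis that I will need. Since $(n-m-1)+(m+1)=n$, I only have to see that both parts are positive and weakly decreasing. The hypothesis $m<\lfloor \frac{n}{2}\rfloor$ gives $m\leq \lfloor \frac{n}{2}\rfloor -1$, and checking the two parities separately yields $n\geq 2m+2$ in both cases, equivalently $n-m-1\geq m+1$. Hence $(n-m-1,m+1)$ is a bona fide partition of $n$, and the key point I will exploit is that \emph{both} of its parts strictly exceed $m$: trivially $m+1>m$, and $n-m-1\geq m+1>m$.

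Next I would isolate the structural fact that drives everything. Labelling leg $i$ as the path $s_{i,1},\dots,s_{i,\lambda_i}$ with $s_{i,1}$ adjacent to the centre, I claim that any connected subgraph of $S(\lambda)$ that does not contain the centre lies entirely within a single leg. Indeed, in a spider every path joining a vertex of one leg to a vertex of a different leg must pass through the centre, so a connected vertex set avoiding the centre cannot meet two distinct legs. Since each leg has at most $m=\max_i \lambda_i$ vertices, every such centre-free connected set has size at most $m$.

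Finally I would combine the two. Suppose for contradiction that $S(\lambda)$ had a connected partition $\{A,B\}$ of type $(n-m-1,m+1)$. The centre lies in exactly one block, say $A$; then $B$ is a connected set avoiding the centre, so $|B|\leq m$ by the structural claim. But $|B|$ equals one of the part sizes $n-m-1$ or $m+1$, both of which exceed $m$ — a contradiction, and the case where the centre lies in $B$ is identical. Thus $S(\lambda)$ is missing a connected partition of type $(n-m-1,m+1)$, and the contrapositive of Theorem~\ref{the:e_positivity_crit} gives that $S(\lambda)$ is not $e$-positive. I do not anticipate a real obstacle here: the heart of the argument is a one-line pigeonhole/connectivity observation, and the only step requiring care is the parity bookkeeping guaranteeing $n-m-1>m$, since it is precisely this that makes \emph{both} parts too large to fit inside one leg and lets the argument run regardless of which block contains the centre.
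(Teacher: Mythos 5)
Your proof is correct and follows essentially the same route as the paper: both arguments reduce to the observation that in any two-block connected partition of $S(\lambda)$ the block avoiding the centre must lie inside a single leg and hence has at most $m$ vertices, while the hypothesis $m<\lfloor \frac{n}{2}\rfloor$ forces both parts $n-m-1$ and $m+1$ to exceed $m$. The only cosmetic difference is that the paper phrases this via single-edge deletions (the tree fact that two-block connected partitions of a spider arise exactly from deleting one edge, yielding types $(n-i,i)$ with $1\leq i\leq m$), whereas you argue directly by contradiction and in doing so make explicit the connectivity step the paper calls ``straightforward to see.''
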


\begin{proof} Consider the types of connected partitions that can be formed by the deletion of one edge from $S(\lambda)$. It is straightforward to see that the only connected partitions of type $(n-i, i)$ can be formed where $1\leq i \leq m$. Since $m < \lfloor \frac{n}{2} \rfloor$ we have that $n-(m+1)\geq m+1$ and so $S(\lambda)$ is missing a connected partition of type
$$(n-(m+1), m+1)=(n-m-1, m+1).$$Hence, $S(\lambda)$ is not $e$-positive by Theorem~\ref{the:e_positivity_crit}.
\end{proof}

\begin{remark}\label{rem:short_legs} If $\lambda$ and $m$ are as in Lemma~\ref{lem:short_legs} and $\mu = (n-m-1, m+1)$, then using the Newton-Girard Identities and \cite[Theorem 2.5]{Stan95} one can compute directly that
$$[e_\mu] X_{S(\lambda)} = -n \mbox{ if } m+1\neq n-m-1$$and
$$[e_\mu] X_{S(\lambda)} = -\frac{n}{2} \mbox{ if } m+1= n-m-1$$where $[e_\mu] X_{S(\lambda)}$ denotes the coefficient of $e_\mu$ in $X_{S(\lambda)}$ when expanded as a linear combination of elementary symmetric functions.
\end{remark}

\begin{example}\label{ex:short_legs} If $\lambda = (2,2,1,1)\vdash 6$ then $m=2$ and $S(2,2,1,1)$, below, is missing a connected partition of type $(4,3)$ so is not $e$-positive by Lemma~\ref{lem:short_legs}. More precisely, it is not $e$-positive since $X_{S(2,2,1,1)}$ contains the term $-7e_{(4,3)}$ by the above remark.
\begin{center}
\begin{tikzpicture}
\filldraw [black] (0,0) circle (4pt);
\filldraw [black] (0,2) circle (4pt);
\filldraw [black] (1,1) circle (4pt);
\filldraw [black] (2,0) circle (4pt);
\filldraw [black] (2,2) circle (4pt);
\filldraw [black] (3.5,0) circle (4pt);
\filldraw [black] (3.5,2) circle (4pt);
\draw[thick] (0,0)--(1,1);
\draw[thick] (0,2)--(1,1);
\draw[thick] (1,1)--(2,0);
\draw[thick] (1,1)--(2,2);
\draw[thick] (2,2)--(3.5,2);
\draw[thick] (2,0)--(3.5,0);
\end{tikzpicture}
\end{center}
\end{example}

For our next three results we construct larger spiders that are not $e$-positive and have been created from smaller ones. Hence, our focus will temporarily not be on the total number of vertices, $n$, but on the number of vertices in an initial set of legs.

\begin{lemma}\label{lem:induction_lem}
Let $i \geq 0$, $\lambda \vdash N$ have at least two parts, and $m$ be the maximum of the parts of $\lambda$. Suppose the spider $S(i,\lambda)$ is missing a connected partition of type $\mu \vdash (i + N+1)$ where $m + 1 \leq \mu_k \leq N$ for each part $\mu_k$ of $\mu$. Then the spider $S(i+N, \lambda)$ is missing a connected partition of type $$(N, \mu)$$ and hence is not $e$-positive.  
\end{lemma}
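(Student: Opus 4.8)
The plan is to argue by contradiction: I assume that $S(i+N,\lambda)$ \emph{does} admit a connected partition $C$ of type $(N,\mu)$ and show this is impossible, whence $S(i+N,\lambda)$ fails the criterion of Theorem~\ref{the:e_positivity_crit} and is not $e$-positive. The natural first instinct is a \emph{reduction}: given such a $C$, try to excise $N$ vertices from the tail of the long leg (of length $i+N$) so as to shorten it to length $i$, thereby transporting $C$ to a connected partition of $S(i,\lambda)$ of type $\mu$ and contradicting the hypothesis. The snag with this route is that an arbitrary $C$ need not place the last $N$ vertices of the long leg into a clean union of whole blocks, so the excision is not immediate. I would therefore pursue a sharper structural analysis of where the blocks of $C$ can sit, which turns out to settle the matter directly.

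The crux is the interaction between the lower bound $m+1\le\mu_k$ and the lengths of the short legs. Write $\lambda=(\lambda_1,\dots,\lambda_d)$ with $d\ge 2$; since $\lambda$ has at least two parts and maximum part $m$, we have $N=\sum_j\lambda_j\ge m+1$, so \emph{every} part of $(N,\mu)$ (both the part $N$ and each $\mu_k$) is at least $m+1$, hence strictly larger than every short-leg length $\lambda_j\le m$. Now any block of $C$ that does not contain the centre is a connected vertex set avoiding the centre, and since the spider minus its centre is a disjoint union of paths (the legs), such a block lies entirely within a single leg as a contiguous segment. A block contained in a short leg $j$ would then have size at most $\lambda_j\le m$, which is impossible because every block has size at least $m+1$. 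Consequently the only block that may meet the short legs is the block $V_c$ containing the centre, and therefore $V_c$ contains the centre together with the whole of every short leg.

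This forces $|V_c|\ge 1+\sum_j\lambda_j=1+N$. But $|V_c|$ is one of the parts of $(N,\mu)$, all of which are at most $N$, giving $N+1\le|V_c|\le N$, a contradiction. Hence no connected partition of $S(i+N,\lambda)$ of type $(N,\mu)$ exists, and $S(i+N,\lambda)$ is not $e$-positive by Theorem~\ref{the:e_positivity_crit}. The main obstacle is precisely recognising that the lower bound $m+1$ on the parts is what drives the argument: it pins every short-leg vertex into the centre's block and makes that block too large to be a legal part, so the tempting but fiddly excision step is never needed. The hypothesis that $S(i,\lambda)$ misses a connected partition of type $\mu$ is then best viewed as inductive bookkeeping for the subsequent theorems (the conclusion's type $(N,\mu)$ again has all parts in $[m+1,N]$, so the lemma re-applies), rather than as something the present argument must consume.
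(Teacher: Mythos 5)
Your proof is correct, and it takes a genuinely different route from the paper's. The paper's proof fixes the block $V_1$ of size exactly $N$ and analyses where it can sit: if $V_1$ contains the centre or meets one of the legs of $\lambda$, then deleting $V_1$ leaves either an isolated vertex or a path with at most $m$ vertices, which no part of $\mu$ can cover since $m+1\leq\mu_k$; in the remaining case $V_1$ is a segment of the long leg, and its deletion leaves $S(j,\lambda)\sqcup P_{i-j}$ for some $0\leq j\leq i$, a graph obtainable from $S(i,\lambda)$ by deleting a single edge, so a connected partition of type $\mu$ there would contradict the hypothesis on $S(i,\lambda)$ --- the only point where that hypothesis is used. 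You instead analyse the block containing the centre and reach an outright numerical contradiction: since every part of $(N,\mu)$ lies in $[m+1,N]$ and every block avoiding the centre sits inside a single leg, no block fits inside a leg of $\lambda$, so the centre's block must absorb the centre together with all $N$ short-leg vertices and has size at least $N+1$, exceeding every admissible part. This is shorter, and, as you note in closing, it never consumes the hypothesis: you in fact prove the stronger unconditional statement that no spider $S(\ell,\lambda)$, for any long-leg length $\ell\geq 0$, admits a connected partition with all parts in $[m+1,N]$. Applied with $\ell=i$ this shows the lemma's hypothesis is automatically satisfied whenever such a $\mu$ exists, so there is no inconsistency with the paper, whose applications verify the hypothesis separately via Lemma~\ref{lem:short_legs}; your centre-block argument would even streamline the iterations in Theorem~\ref{the:induction_short_res_1}, where the types $(N^{a+1},i+1)$ and $(N^a,N+i-m,m+1)$ again have all parts in $[m+1,N]$, whereas the paper's $V_1$-based route requires the separate containment argument given there for each $a$.
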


To aid comprehension, we give an example before the proof.

\begin{example}\label{ex:induction_lem} If $\lambda = (2,1,1)$ then $N=4$ and $m=2$. Setting $i=2$, we know from Example~\ref{ex:short_legs} that $S(2,2,1,1)$ is missing a connected partition of type $\mu = (4,3)\vdash (2+4+1)$ and its parts satisfy $m+1=3\leq 4,3 \leq 4 = N$. Hence, $S(6,2,1,1)$ is missing a connected partition of type $(4,4,3)$.
\end{example}

\begin{proof}
{We will try to construct a connected partition of type $(N, \mu)$ and find this is impossible.}
Suppose $V_1$ is a connected component in a connected partition  of $S = S(i+N,\lambda)$ with $N$ vertices. Let $L'$ denote the vertices that are part of the leg of length $i + N$ in $S$ and {$L_k$} denote the vertices that are part of the leg of length {$\lambda_k$} in $S$.

If $V_1$ does not contain any vertex from $L'$, $V_1$ must contain the centre as each set {$L_k$} by assumption has less than or equal to $N-1$ vertices. As $V_1$ must contain the centre and furthermore cannot contain all $N+1$ vertices in the subtree $S(\lambda)$ in $S$, the deletion of $V_1$ and its incident edges produces an isolated vertex.  Therefore, a connected partition of type $\mu$ cannot be formed from $S$ with all vertices in $V_1$ deleted, as each part of $\mu$ has size greater than or equal to 2 by assumption. 

Hence, $V_1$ must contain some vertices from $L'$. If $V_1$ contained some vertex in some {$L_k$} as well, it must contain the centre since $V_1$ is connected. Once again, since the subtree $S(\lambda)$ has $N+1$ vertices and $V_1$ contains the centre, $V_1$ cannot contain the subtree $S(\lambda)$ entirely and the deletion of $V_1$ from $S$ then produces at least two connected components where one of them is a path with less than or equal to $m$ vertices. Hence, a connected partition of type $\mu$ cannot be formed from the remaining graph as each part has size greater than or equal to $m+1$. 

{Thus, $V_1$ must contain either the centre and vertices from $L'$ only, or vertices from $L'$ only. If $V_1$ contains the centre and vertices from $L'$ only, then, as in the previous paragraph, the deletion of $V_1$ from $S$ then produces at least two connected components where one of them is a path with less than or equal to $m$ vertices. Hence, a connected partition of type $\mu$ cannot be formed from the remaining graph as each part has size greater than or equal to $m+1$.}

Therefore, $V_1$ must contain vertices from $L'$ only, {which} form an induced path of length $N$. Upon deleting $V_1$ and its incident edges, the graph $G_j$ formed is the disjoint union of $S(j,\lambda)$ and a path of length $i - j$ for some $0 \leq j \leq i$.  However, if $G_j$ had a connected partition of type $\mu$ for some $j$, this contradicts the assumption that $S(i,\lambda)$ was missing a connected partition of   type $\mu$, since any $G_j$ can be formed by first deleting an edge from the length $i$ path in $S(i,\lambda)$.

As we have exhausted all possibilities for $V_1$ and $|V_1| = N$, then $S(i+N,\lambda)$ is missing a connected partition of type $(N, \mu)$, and hence it is not $e$-positive by Theorem~\ref{the:e_positivity_crit}. 
\end{proof}

We now come to a substantial way to create families of spiders that are missing a connected partition.

\begin{theorem}\label{the:induction_short_res_1}
Let $\lambda \vdash N$ and $m$ be the maximum of the parts of $\lambda$. If $\max(2m-N+1, 0) \leq i < N$, then the  spider $S(i + Na, \lambda)$ is not $e$-positive for every integer $a \geq 0$.  In particular, the spider $S(i+Na,\lambda)$ is missing a connected partition of type $\mu$ where $$\mu = \begin{cases} (N^{a+1}, i+1) & m \leq i < N \\ (N^a, N+i-m, m+1) & i < m \end{cases}.$$ 
\end{theorem}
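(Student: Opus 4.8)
The plan is to induct on $a$, using Lemma~\ref{lem:short_legs} for the base case $a=0$ and Lemma~\ref{lem:induction_lem} for the inductive step; once a missing connected partition is exhibited, non-$e$-positivity is immediate from Theorem~\ref{the:e_positivity_crit}. Throughout I would write $n = i+N+1$ for the number of vertices of $S(i,\lambda)$, set $m^* = \max(i,m)$, and define $\mu_a = (N^a, n-m^*-1, m^*+1)$. One checks directly that this unpacks to $(N^{a+1}, i+1)$ when $m \le i < N$ (where $m^* = i$) and to $(N^a, N+i-m, m+1)$ when $i < m$ (where $m^* = m$), matching the two cases in the statement. I would treat $\lambda$ as having at least two parts, as required to invoke Lemma~\ref{lem:induction_lem}, and note at the outset that this gives $m < N$; moreover, when $i=0$ the hypothesis forces $2m-N+1 \le 0$, hence $N - m \ge m+1$, so the parts of $\lambda$ other than the maximum sum to at least $m+1$ and must number at least two, giving $\lambda$ at least three parts. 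Thus in every case $S(i,\lambda)$ genuinely has at least three legs and Lemma~\ref{lem:short_legs} applies.

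For the base case I would view $S(i,\lambda)$ as the spider with leg-length partition $(i,\lambda) \vdash n-1$ of maximum part $m^*$ and apply Lemma~\ref{lem:short_legs}. The only thing to verify is its hypothesis $m^* < \lfloor n/2 \rfloor$: when $m^* = i$ this reduces to $i < N$, and when $m^* = m$ it follows from $i \ge 2m-N+1$, which gives $i+N+1 \ge 2m+2$ and hence $\lfloor n/2 \rfloor \ge m+1$. Lemma~\ref{lem:short_legs} then yields that $S(i,\lambda)$ is missing a connected partition of type $(n-m^*-1, m^*+1) = \mu_0$, which is the base case.

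For the inductive step I would assume $S(i+Na,\lambda)$ is missing a connected partition of type $\mu_a$ and apply Lemma~\ref{lem:induction_lem} with the role of its ``$i$'' played by $i+Na$ and its ``$\mu$'' by $\mu_a$. I would check that $\mu_a \vdash (i+Na)+N+1$, since its parts sum to $aN + (n-m^*-1) + (m^*+1) = aN + n = (i+Na)+N+1$, and then read off the conclusion that $S((i+Na)+N,\lambda) = S(i+N(a+1),\lambda)$ is missing $(N,\mu_a) = \mu_{a+1}$, closing the induction. A final appeal to Theorem~\ref{the:e_positivity_crit} converts each missing partition into non-$e$-positivity for every $a \ge 0$.

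The main obstacle is verifying the remaining hypothesis of Lemma~\ref{lem:induction_lem}, namely that every part $\mu_k$ of $\mu_a$ satisfies $m+1 \le \mu_k \le N$, since this is precisely where the numerical constraint $\max(2m-N+1,0) \le i < N$ is consumed. The parts of $\mu_a$ are $N$ (with multiplicity $a$), $n-m^*-1$, and $m^*+1$. The part $N$ lies in $[m+1,N]$ because $m < N$. For $m^*+1$ one has $m^*+1 \ge m+1$ trivially, while $m^*+1 \le N$ follows from $i < N$ when $m^* = i$ and from $m < N$ when $m^* = m$. For $n-m^*-1 = i+N-m^*$, the case $m^* = i$ gives exactly $N$, and the case $m^* = m$ gives $i+N-m$, whose lower bound $i+N-m \ge m+1$ is exactly $i \ge 2m-N+1$ and whose upper bound $i+N-m \le N$ is $i \le m$. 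Since the inductive step only prepends the part $N$, which already lies in $[m+1,N]$, these bounds are preserved at every stage, so it suffices to check them for $\mu_0$, after which the induction runs automatically.
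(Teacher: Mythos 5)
Your proof is correct and follows essentially the same route as the paper: Lemma~\ref{lem:short_legs} supplies the missing type $(n-m^*-1,\,m^*+1)$ for $S(i,\lambda)$ (with the same floor computations the paper does in its two cases), and Lemma~\ref{lem:induction_lem} propagates it, with Theorem~\ref{the:e_positivity_crit} finishing. The only (harmless) difference is that where the paper applies Lemma~\ref{lem:induction_lem} once to reach $S(i+N,\lambda)$ and then handles general $a$ by a direct absorption argument forcing $a-1$ of the $N$-vertex blocks into the long leg, you iterate the lemma by induction on $a$, which is valid precisely for the reason you note: the prepended part is $N$ itself, so the hypothesis $m+1\leq\mu_k\leq N$ persists at every stage.
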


\begin{proof} 
Note that if a spider is missing a connected partition then it is not $e$-positive by Theorem~\ref{the:e_positivity_crit}. Hence, we now proceed to find a missing connected partition in all cases.

Before we do this, observe that if $\lambda = (N)$, so $m=N$, then no such $i$ exists. Thus assume that $\lambda \vdash N$ with at least 2 parts. We will now study two cases, $m\leq i$ and $i<m$. In each case we will study the spider $S(i,\lambda)$ before drawing our desired conclusions about $S(i+Na, \lambda)$.

For the first case, if $1\leq m \leq i <N$, then $2m-N+1 \leq m$ since $m\leq N-1$, so our condition on $i$ is trivially satisfied. Also note that all legs in the spider $S(i,\lambda)$ have length less than $\lfloor \frac{i+N+1}{2} \rfloor$. This is because there are $i+N+1$ vertices in total and 
$$\left\lfloor \frac{i + N + 1}{2} \right\rfloor > {\left\lfloor\frac{2i}{2} \right\rfloor}= i \geq m.$$Hence, by Lemma~\ref{lem:short_legs}, since $i$ is the maximum of the parts of $\lambda$ {and $i$}, the spider $S(i,\lambda)$ is missing a connected partition of type $(N, i+1)$. Hence, by Lemma~\ref{lem:induction_lem} since $m+1\leq i+1\leq N$, by assumption, we have that $S(i+N, \lambda)$ is missing a connected partition of type $(N,N,i+1)$.

Consequently, $S(i+Na, \lambda)$ is missing a connected partition of type $(N^{a+1}, i+1)$, since if not then $(a-1)$ connected components with $N$ vertices would have to be contained in the leg of length $i+Na$ (this is because otherwise one of these connected components with $N$ vertices would consist of the centre vertex connected to $N-1$ other vertices, which could not yield a connected partition of type $(N^{a+1}, i+1)$ since $i\geq m$). However, this would imply that $S(i+N, \lambda)$ is not missing a connected partition of type $(N,N,i+1)$, a contradiction.

For the second case, first suppose $0 \leq 2m-N+1 \leq i < m$. Since, therefore, $2m+1\leq i+N$, 
$$\left\lfloor \frac{i+N+1}{2} \right\rfloor \geq \left\lfloor \frac{2m+2}{2} \right\rfloor = m+1 > m > i$$the spider $S(i, \lambda)$ is missing a connected partition of type $(N+i-m, m+1)$
 by Lemma~\ref{lem:short_legs} since $m$ is the length of the longest leg in $S(i, \lambda)$. Hence, by Lemma~\ref{lem:induction_lem}, since $m+1=N+(2m-N+1)-m \leq N+i-m<N$, because $i<m$ by assumption, we have that $S(i+N, \lambda)$ is missing a connected partition of type $(N, N+i-m, m+1)$. 
 
 Alternatively suppose $2m-N+1<0\leq i <m$. The first inequality implies that $m<\frac{N-1}{2}$ so
 {$$\left\lfloor \frac{i+N+1}{2} \right\rfloor  > m$$}and hence the spider $S(i, \lambda)$ is missing a connected partition of type $(N+i-m, m+1)$ by Lemma~\ref{lem:short_legs} since $m$ is the length of the longest leg in $S(i, \lambda)$. Hence, by Lemma~\ref{lem:induction_lem}, since $m+1\leq N+i-m<N$, because $i<m$ by assumption, we have that $S(i+N, \lambda)$ is missing a connected partition of type $(N, N+i-m, m+1)$.
 
Consequently, in both subcases of the second case $S(i+Na, \lambda)$ is missing a connected partition of type $(N^a, N+i-m, m+1)$ since if not then $(a-1)$ connected components with $N$ vertices would have to be contained in the leg of length $i+Na$ (this is because otherwise one of these connected components with $N$ vertices would consist of the centre vertex connected to $N-1$ vertices, which could not yield a connected partition of type $(N,N+i-m,m+1)$). However, this would imply that $S(i+N, \lambda)$ is not missing a connected partition of type $(N,N+i-m,m+1)$, a contradiction.
\end{proof}

\begin{example}\label{ex:induction_short_res_1}
If  $\lambda = (2,1,1)$, then $N=4$ and $m=2$. Since $\max(2m-N+1, 0)=1 \leq 2 < 4=N$ we can set $i=2$ and hence, by Theorem~\ref{the:induction_short_res_1}, every spider $S(2+4a, 2, 1,1)$ is missing a connected partition of type $(4^{a+1}, 3)$ for every integer $a \geq 0$. 
\end{example}

Applying the above theorem now leads us to a surfeit of spiders that are not $e$-positive.

\begin{theorem}\label{the:induction_short_res_2}
Let $\lambda \vdash N$ and $m$ be the maximum of the parts of $\lambda$. If $m < \lfloor \frac{N}{2} \rfloor$ and $i \geq 0$ is an integer, then every spider $S(i,\lambda)$ is not $e$-positive.
\end{theorem}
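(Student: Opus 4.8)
The plan is to deduce this statement directly from Theorem~\ref{the:induction_short_res_1}, whose conclusion already covers all the spiders $S(i + Na, \lambda)$ with $a \geq 0$ once the index $i$ lies in the window $\max(2m-N+1,0) \leq i < N$. The whole argument hinges on the observation that the stronger hypothesis $m < \lfloor N/2 \rfloor$ collapses the lower endpoint of that window to $0$, so that the window becomes a full set of residues modulo $N$.

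First I would verify that $\max(2m - N + 1, 0) = 0$ under the hypothesis. Since $m < \lfloor N/2 \rfloor \leq N/2$, we have $2m < N$, and as all quantities are integers this gives $2m \leq N-1$, i.e.\ $2m - N + 1 \leq 0$; hence $\max(2m-N+1,0)=0$ and Theorem~\ref{the:induction_short_res_1} applies to every $i$ with $0 \leq i < N$. It is worth recording here that the hypothesis also forces $\lambda$ to have at least three parts, so that $S(i,\lambda)$ is a genuine spider for all $i \geq 0$: if $\lambda$ had a single part then $m = N$, and if $\lambda = (a,b)$ with $a \geq b$ then $m = a < \lfloor N/2\rfloor \leq (a+b)/2$ would force $a < b$, a contradiction.

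Next I would reduce an arbitrary $i \geq 0$ to this window by the division algorithm. Writing $i = i_0 + Na$ with $0 \leq i_0 < N$ and $a = \lfloor i/N \rfloor \geq 0$, the spider $S(i,\lambda)$ is exactly $S(i_0 + Na, \lambda)$. Since $\max(2m-N+1,0) = 0 \leq i_0 < N$, Theorem~\ref{the:induction_short_res_1}, applied with the residue $i_0$ in the role of its index, yields that $S(i_0 + Na, \lambda)$ is not $e$-positive for this value of $a$. As $i_0 + Na = i$, this is precisely the assertion that $S(i,\lambda)$ is not $e$-positive, completing the argument.

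Because all of the genuine work has already been absorbed into Lemma~\ref{lem:induction_lem} and Theorem~\ref{the:induction_short_res_1}, I do not expect any real obstacle here; the statement is essentially a corollary. The only two points requiring care are the integrality step establishing $\max(2m-N+1,0)=0$ from $m < \lfloor N/2\rfloor$, and the remark that every residue class $i_0 \in \{0,1,\dots,N-1\}$ is indeed covered, so that every nonnegative $i$ is reached — both of which are elementary.
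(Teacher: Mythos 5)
Your proposal is correct and follows essentially the same route as the paper: both deduce the theorem from Theorem~\ref{the:induction_short_res_1} by showing that $m < \lfloor \frac{N}{2} \rfloor$ forces $2m-N+1 \leq 0$, so the admissible window becomes $0 \leq i_0 < N$ and the division algorithm (implicit in the paper, explicit in your write-up) covers every $i \geq 0$. Your single integrality argument ($2m < N$ hence $2m \leq N-1$) merely replaces the paper's even/odd case split, and your check that $\lambda$ must have at least three parts is a harmless extra detail.
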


\begin{proof}
By Theorem~\ref{the:induction_short_res_1}, it suffices to show that if $m < \left \lfloor \frac{N}{2} \right \rfloor$, then $2m-N+1 \leq 0$. If $N$ is even, then $2m < N$, so $2m - N < 0$ and $2m - N + 1 \leq 0$. Otherwise if $N$ is odd, then $2m < N-1$, so once again $2m - N + 1 \leq 0$.  
\end{proof}

An alternative approach for finding a missing connected partition is given in the following lemma. 

\begin{lemma}\label{lem:quotient_construction}
Suppose $S=S(\lambda_1, \dots, \lambda_d)$ is an $n$-vertex spider with $(\lambda_1, \dots, \lambda_d)$ a partition, $\lambda _1 \geq \lambda _2 +\cdots + \lambda _d$, $\lambda_2 \leq \lambda_3 + \dots + \lambda_d$, and with $\lambda_2 \geq 2$. Let $n = q(\lambda_2 + 1) + r$ {where $0\leq r <\lambda _2 +1$}, and $r = qd' + r'$ {where $0\leq r' <q$}. If $\lambda_2 \geq 3$, or if $\lambda_2 = 2$ and $q \geq 3$, then $S$ is missing a connected partition of type $$(\lambda_2 + d' + 2)^{r'} (\lambda_2 + d' + 1)^{q - r'}.$$ 
\end{lemma}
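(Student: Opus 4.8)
The plan is to argue by contradiction: assume $S$ admits a connected partition $C$ of the stated type $\mu = (\lambda_2 + d' + 2)^{r'}(\lambda_2 + d' + 1)^{q-r'}$ and extract an impossibility from the block sizes. The structural fact I would lean on is the one already used in the proof of Lemma~\ref{lem:spider_lem}: since the legs of a spider meet only at the centre, every block of $C$ not containing the centre lies entirely inside a single leg, and is therefore a subpath of that leg. Writing $V_1$ for the unique block of $C$ that contains the centre, the whole argument reduces to controlling how large $V_1$ is forced to be.

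The first key step is to observe that every part of $\mu$ has size at least $\lambda_2 + d' + 1 \geq \lambda_2 + 1$, since $d' \geq 0$. Consequently no block of $C$ can be a subpath of one of the short legs $P_{\lambda_2}, \dots, P_{\lambda_d}$, because such a subpath has at most $\lambda_i \leq \lambda_2 < \lambda_2 + 1$ vertices. Hence no short-leg vertex can lie in a non-central block, which forces all of the short legs into $V_1$. Together with the centre this yields the lower bound
$$|V_1| \geq 1 + (\lambda_2 + \lambda_3 + \cdots + \lambda_d) \geq 1 + 2\lambda_2,$$
where the last inequality uses the hypothesis $\lambda_2 \leq \lambda_3 + \cdots + \lambda_d$.

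On the other hand $V_1$ is one of the blocks of $C$, so $|V_1|$ cannot exceed the largest part of $\mu$, namely $\lambda_2 + d' + 2$. The contradiction I am aiming for is therefore $1 + 2\lambda_2 > \lambda_2 + d' + 2$, i.e. $d' \leq \lambda_2 - 2$, and proving this bound on $d'$ is where I expect the only genuine obstacle to lie. The idea is purely arithmetic: from $d' = \lfloor r/q\rfloor$ with $0 \leq r < \lambda_2 + 1$ we get $d' \leq \lambda_2/q$, so it suffices to establish $q \geq 3$ and then check $\lfloor \lambda_2/3\rfloor \leq \lambda_2 - 2$ for all $\lambda_2 \geq 2$. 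To get $q \geq 3$ I would combine the two size hypotheses: $\lambda_1 \geq \lambda_2 + \cdots + \lambda_d$ and $\lambda_2 \leq \lambda_3 + \cdots + \lambda_d$ give
$$n - 1 = \lambda_1 + (\lambda_2 + \cdots + \lambda_d) \geq 2(\lambda_2 + \cdots + \lambda_d) \geq 4\lambda_2,$$
whence $q = \lfloor n/(\lambda_2+1)\rfloor \geq 3$. This is precisely the extra hypothesis supplied in the $\lambda_2 = 2$ case, and it is automatic once $\lambda_2 \geq 3$, which explains the case split in the statement.

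Finally I would assemble the pieces: with $q \geq 3$ one has $d' \leq \lfloor \lambda_2/3\rfloor \leq \lambda_2 - 2$, so the largest part of $\mu$ is at most $2\lambda_2$, contradicting $|V_1| \geq 1 + 2\lambda_2$. Since this rules out every connected partition of $S$ of type $\mu$, the spider is missing a connected partition of that type, as claimed (and hence, by Theorem~\ref{the:e_positivity_crit}, not $e$-positive). The delicate points to watch are that $d' \leq \lambda_2 - 2$ must hold uniformly across both the $\lambda_2 = 2$ and $\lambda_2 \geq 3$ regimes, and that the size count genuinely uses all of the hypotheses $\lambda_2 \geq 2$, $\lambda_1 \geq \lambda_2 + \cdots + \lambda_d$, $\lambda_2 \leq \lambda_3 + \cdots + \lambda_d$, and $q \geq 3$; dropping any one of them breaks either the lower bound on $|V_1|$ or the upper bound on $\max(\mu)$.
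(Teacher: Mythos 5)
Your proof is correct and follows essentially the same route as the paper's: you force the block $V_1$ containing the centre to absorb all legs of length at most $\lambda_2$ (since every part of the alleged type exceeds $\lambda_2$), obtain $|V_1| \geq 1 + 2\lambda_2$ from $\lambda_2 \leq \lambda_3 + \cdots + \lambda_d$, and reach the same contradiction via $d' \leq \lambda_2 - 2$, so that no part of size $\lambda_2 + d' + 1$ or $\lambda_2 + d' + 2$ can accommodate $V_1$. The only divergence is in the arithmetic, and it is in your favour: where the paper proves only $q \geq 2$ and then splits into the cases $\lambda_2 \geq 3$ versus $\lambda_2 = 2$ with $q \geq 3$ assumed, you deduce $n - 1 \geq 4\lambda_2$ from $\lambda_1 \geq \lambda_2 + \cdots + \lambda_d$ and $\lambda_2 \leq \lambda_3 + \cdots + \lambda_d$, hence $q \geq 3$ unconditionally, which in fact shows that the lemma's $q \geq 3$ proviso in the $\lambda_2 = 2$ case is redundant given the other hypotheses.
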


\begin{proof}
Suppose $S$ has a connected partition $C$ of type $(\lambda_2 + d' + 2)^{r'} (\lambda_2 + d' + 1)^{q - r'}$. 
Consider the set $V_1\in C$ containing the leaf on a leg of length $\lambda _2$. Since every set {in $C$} must contain at least $\lambda_2 + 1$ vertices, $V_1$ contains the centre, and hence also all legs of length less than or equal to $\lambda_2$ since all sets in the connected partition have size greater than $\lambda_2$.

Hence, since $\lambda_2 \leq \lambda_3 + \dots + \lambda_d$, $$|V_1|  \geq 1 + \lambda_2 + \dots + \lambda_d \geq 1 + 2 \lambda_2.$$

However, we claim that $1 + 2 \lambda_2 > \lambda_2 + d' + 2$. This is because $$n = 1 + \lambda_1 + \dots + \lambda_d \geq 1 + 2(\lambda_2 + \dots + \lambda_d) > 2(\lambda_2 + 1)$$ by assumption that $\lambda _1 \geq \lambda _2 +\cdots + \lambda _d$. So $q \geq 2$. Hence, if $\lambda_2 \geq 3$, then $$\frac{\lambda_2}{\lambda_2 - 1} = 1 + \frac{1}{\lambda_2 - 1} < 2 \leq q.$$Otherwise if $\lambda_2 = 2$, then $\frac{\lambda_2}{\lambda_2 - 1} = 2 < q$ when $q \geq 3$. Therefore, in either case, 

$$r - r' = qd' \leq r \leq \lambda_2 < q(\lambda_2 - 1),$$which implies that $d' < \lambda_2 - 1$ by dividing both sides by $q$. Adding $\lambda_2 + 2$ to both sides shows that $1 + 2\lambda_2 > \lambda_2 + d' + 2$, which contradicts that $C$ is a connected partition of the desired type as  $V_1$ does not contain $\lambda_2 + d' + 2$ or $\lambda_2 + d' + 1$  vertices. 
\end{proof}

\begin{example}\label{ex:quotient_construction}
Let $S = S(8, 2, 2, 1)$. Since $8\geq 2+2+1$ and $2\leq 2+1$, and $n = 14 = 4(3) + 2$, the conditions of Lemma~\ref{lem:quotient_construction} are met. Since $2 = 4(0) + 2$, $S$ is missing a connected partition of type $(\lambda_2 + 2)^2 (\lambda_2 + 1)^2 = (4,4,3,3)$. 
\end{example}

We now will  generalize Lemma~\ref{lem:quotient_construction} and then bound the number of vertices before giving our key result on the $e$-positivity of a spider.

\begin{lemma}\label{lem:quotient_construction_2}
Let $i \geq 3$ be an integer. Suppose $S(\lambda_1, \dots, \lambda_d)$ is an $n$-vertex spider with  $(\lambda_1, \dots, \lambda_d)$ a partition, with a $\lambda_i \geq 2$ satisfying $\lambda_i \leq \lambda_{i+1} + \dots + \lambda_d$ {where $i<d$}, and $\lambda_j > \lambda_{j+1} + \dots + \lambda_d$ for all $j < i$. Let $n = q(\lambda_i + 1) + r$ {where $0\leq r <\lambda _i +1$}, and $r = qd' + r'$ {where $0\leq r' <q$}. Then $S$ is missing a connected partition of type $$(\lambda_i + d' + 2)^{r'} (\lambda_i + d' + 1)^{q - r'}.$$
\end{lemma}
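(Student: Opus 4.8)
The plan is to adapt the proof of Lemma~\ref{lem:quotient_construction} almost verbatim, letting $\lambda_i$ play the role that $\lambda_2$ played there, and to show that the extra hypotheses of that lemma (``$\lambda_2 \geq 3$, or $\lambda_2 = 2$ and $q \geq 3$'') are here supplied automatically by the nested domination conditions together with $i \geq 3$. First I would check that the proposed multiset is a genuine partition of $n$: expanding, $r'(\lambda_i + d' + 2) + (q-r')(\lambda_i + d' + 1) = q(\lambda_i+1) + (qd' + r') = q(\lambda_i + 1) + r = n$. Then, assuming for contradiction that $S$ has a connected partition $C$ of the stated type, I note every block has at least $\lambda_i + d' + 1 \geq \lambda_i + 1$ vertices. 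Taking $V_1 \in C$ to be the block containing the leaf of a leg of length $\lambda_i$, the inequality $|V_1| > \lambda_i$ prevents $V_1$ from sitting inside that leg, so being connected it must contain the whole leg together with the centre.

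With the centre captured, the usual argument shows $V_1$ contains every leg of length $\leq \lambda_i$: if some such leg $L$ were not fully contained in $V_1$, then a block other than $V_1$ would be a sub-path of $L$ of size $\leq \lambda_i$, contradicting the minimum block size. Since the parts are sorted, the legs of length $\leq \lambda_i$ are exactly legs $i, i+1, \dots, d$ (note $\lambda_{i-1} > \lambda_i + \dots + \lambda_d \geq \lambda_i$ forces strict inequality among the earlier legs). Using $\lambda_i \leq \lambda_{i+1} + \dots + \lambda_d$ this gives $|V_1| \geq 1 + \lambda_i + \dots + \lambda_d \geq 1 + 2\lambda_i$. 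The contradiction will follow once I show $d' \leq \lambda_i - 2$, for then the largest block has size $\lambda_i + d' + 2 \leq 2\lambda_i < 1 + 2\lambda_i \leq |V_1|$.

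The crux, and the only genuinely new part relative to Lemma~\ref{lem:quotient_construction}, is the lower bound forcing $d' < \lambda_i - 1$. Writing $R_j = \lambda_j + \dots + \lambda_d$, the hypotheses $\lambda_j > \lambda_{j+1} + \dots + \lambda_d$ for $j < i$ say precisely that $R_j > 2 R_{j+1}$, so $R_1 > 2^{\,i-1} R_i \geq 2^{\,i-1}\cdot 2\lambda_i = 2^{\,i}\lambda_i$, whence $n = 1 + R_1 > 1 + 2^{\,i}\lambda_i \geq 1 + 8\lambda_i$ because $i \geq 3$. A short estimate then gives $q = \lfloor n/(\lambda_i+1)\rfloor \geq 6$, so in particular $q > \tfrac{\lambda_i}{\lambda_i - 1}$ (the right side being at most $2$ for $\lambda_i \geq 2$). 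Combining this with $qd' = r - r' \leq r \leq \lambda_i < q(\lambda_i - 1)$ and dividing by $q$ yields $d' < \lambda_i - 1$, completing the contradiction and hence the proof by Theorem~\ref{the:e_positivity_crit}. I expect the case $\lambda_i = 2$ to require the most attention, since there $\tfrac{\lambda_i}{\lambda_i-1} = 2$ exactly and one must verify $q$ is \emph{strictly} larger than $2$; this is exactly where the geometric growth of the tail sums $R_j$ (as opposed to the hand-fed hypothesis $q \geq 3$ in the original lemma) does the work, and the bound $n > 1 + 2^{\,i}\lambda_i$ with $i \geq 3$ delivers it comfortably.
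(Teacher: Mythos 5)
Your proposal is correct and takes essentially the same approach as the paper: isolate the block $V_1$ containing the leaf of a leg of length $\lambda_i$, show it must absorb the centre and all legs of length at most $\lambda_i$ so that $|V_1| \geq 1 + 2\lambda_i$, and then derive $d' < \lambda_i - 1$ from the bound $q > \frac{\lambda_i}{\lambda_i-1}$, which both you and the paper obtain from the geometric growth of the tail sums forced by $\lambda_j > \lambda_{j+1} + \dots + \lambda_d$ for $j < i$. The only cosmetic difference is that you bound $q \geq 6$ via $n > 1 + 2^{i}\lambda_i$, while the paper gets $q \geq 2^{i-1} \geq 4$ via $n > 2^{i-1}(\lambda_i + 1)$; both comfortably exceed $2$, which is all that is needed.
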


\begin{proof}
Suppose $S$ has a connected partition $C$ of the desired type.
Consider the set $V_1 \in C$ containing  the leaf on a leg of length $\lambda_i$. Since every set {in $C$} must contain at least $\lambda_i + 1$ vertices, $V_1$ contains the centre, and hence also all legs of length less than or equal to $\lambda_i$ since all sets in the connected partition have size greater than $\lambda_i$. Hence, since $\lambda_i \leq \lambda_{i+1} + \dots + \lambda_d$,

$$|V_1| \geq 1 + \lambda_i + \dots + \lambda_d \geq 1 + 2 \lambda_i.$$

However, we claim that $\frac{\lambda_i}{\lambda_i - 1} \leq 2 < q$. This is because $\lambda_i \geq 2$ so $\frac{\lambda_i}{\lambda_i - 1} \leq 2$,  and repeatedly using the condition that $\lambda_j > \lambda_{j+1} + \dots + \lambda_d$, for all $j < i$ gives

{\begin{align*}n&=1+\lambda _1 + \cdots + \lambda _d\\
&>1+2(\lambda _2 + \cdots + \lambda _d)\\
&>1+4(\lambda _3 + \cdots + \lambda _d)\\
&\vdots\\
&> 1 + 2^{i-1} (\lambda_i + \dots + \lambda_d) \geq 2^{i-1} (\lambda_i + 1).\end{align*}Hence, 
$$(q+1)(\lambda _i +1) > q(\lambda_i + 1) + r = n > 2^{i-1} (\lambda_i + 1)$$and so} $q \geq 2^{i-1} \geq 4>2$ since $i \geq 3$. This implies that $1 + 2 \lambda_i > \lambda_i + d' + 2$ as in the proof of Lemma~\ref{lem:quotient_construction}, which contradicts that $C$ is a connected partition of the desired type as  $V_1$ does not contain {$\lambda_i + d' + 2$ or $\lambda_i + d' + 1$}  vertices. 
\end{proof}

\begin{lemma}\label{lem:count_vertices}
Let $S(\lambda_1, \dots, \lambda_d)$ be an $n$-vertex spider, with  $(\lambda_1, \dots, \lambda_d)$ a partition.
\begin{enumerate}[(a)]
\item If $\lambda_1 \geq \lambda_2 + \dots + \lambda_d$ and $\lambda_i > \lambda_{i+1} + \dots + \lambda_d$ for $2 \leq i \leq d - 1$, then $n > 2^{d-1}$.
\item If $\lambda_1 \geq \lambda_2 + \dots + \lambda_d$, $\lambda_i > \lambda_{i+1} + \dots + \lambda_d$ for {$d\geq 4$ and }$2 \leq i \leq d-2$, and $\lambda_{d-1} = \lambda_d = 1$, then $n > 2^{d-1}$.  
\end{enumerate} 
\end{lemma}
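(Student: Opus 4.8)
The plan is to track the \emph{tail sums} of the partition and observe that each hypothesis says precisely that these tails grow geometrically, which forces $n$ to be exponentially large in $d$. Since the spider $S(\lambda_1,\dots,\lambda_d)$ consists of a centre together with its legs, we have $n = 1 + \lambda_1 + \cdots + \lambda_d$. For $1 \leq i \leq d$ I would set $T_i = \lambda_i + \lambda_{i+1} + \cdots + \lambda_d$, so that $T_1 = n-1$, $T_d = \lambda_d$, and $T_i = \lambda_i + T_{i+1}$. The hypothesis $\lambda_1 \geq \lambda_2 + \cdots + \lambda_d$ becomes $\lambda_1 \geq T_2$, hence $T_1 = \lambda_1 + T_2 \geq 2T_2$; and each hypothesis $\lambda_i > \lambda_{i+1} + \cdots + \lambda_d$ becomes $\lambda_i > T_{i+1}$, hence $T_i = \lambda_i + T_{i+1} > 2T_{i+1}$. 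Thus every condition states exactly that a tail at least doubles as the index decreases.

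For part (a) the strict doubling inequality $T_i > 2T_{i+1}$ is available for every $2 \leq i \leq d-1$, so chaining these $d-2$ inequalities downward from $T_2$ gives $T_2 > 2T_3 > 2^2 T_4 > \cdots > 2^{d-2} T_d$. Since $T_d = \lambda_d \geq 1$, this yields $T_2 > 2^{d-2}$. Feeding this into $T_1 \geq 2T_2$ gives $T_1 \geq 2T_2 > 2^{d-1}$, and therefore $n = 1 + T_1 > 2^{d-1}$, as required.

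For part (b) the strict inequalities only run over $2 \leq i \leq d-2$, but the extra hypothesis $\lambda_{d-1} = \lambda_d = 1$ supplies the missing information at the bottom of the chain: it gives directly $T_{d-1} = \lambda_{d-1} + \lambda_d = 2$. I would then chain the $d-3$ available inequalities $T_i > 2T_{i+1}$ downward from $T_2$ to $T_{d-1}$ to obtain $T_2 > 2^{d-3} T_{d-1} = 2^{d-3}\cdot 2 = 2^{d-2}$, and finish exactly as in part (a): $T_1 \geq 2T_2 > 2^{d-1}$, so $n > 2^{d-1}$.

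I do not expect a genuine obstacle here; the only point that requires care is the interplay of strict and non-strict inequalities. The hypothesis on $\lambda_1$ is only $\geq$, so it gives $T_1 \geq 2T_2$ rather than a strict bound, but because the chain already produces the strict estimate $T_2 > 2^{d-2}$, multiplying by $2$ preserves strictness and yields $T_1 > 2^{d-1}$. I would also record that the index ranges are nonempty (spiders have $d \geq 3$, and part (b) assumes $d \geq 4$) and that the base values $T_d \geq 1$ in (a) and $T_{d-1} = 2$ in (b) correctly seed the chain.
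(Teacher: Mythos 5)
Your proof is correct, and while it rests on the same doubling phenomenon as the paper's, the bookkeeping is genuinely different and slicker. The paper proves both parts by induction on individual leg lengths: for (a) it establishes $\lambda_{d-i} > 2^{i-1}\lambda_d$ for $1 \leq i \leq d-1$, and for (b) it runs a second, separate induction with base case $\lambda_{d-2} \geq 3$ to get $\lambda_{d-i} \geq 3(2^{i-2})$, then in each case sums the per-leg lower bounds over all legs and, in (b), finishes with the numerical check $3(2^{d-2}) - 1 > 2^{d-1}$. Your tail sums $T_i = \lambda_i + \cdots + \lambda_d$ repackage the hypotheses as the exact statements $T_1 \geq 2T_2$ and $T_i > 2T_{i+1}$, which collapses both inductions into a single telescoping chain and unifies the two parts: (a) and (b) differ only in where the chain is seeded, $T_d = \lambda_d \geq 1$ versus $T_{d-1} = \lambda_{d-1} + \lambda_d = 2$. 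What the paper's per-leg version buys in exchange is slightly sharper byproducts --- its argument for (a) actually shows $n > 2^{d-1}\lambda_d$ and its argument for (b) gives $n \geq 3(2^{d-2}) - 1$ --- but neither refinement is used elsewhere, so nothing is lost by your streamlining (and in fact your chain yields $T_1 > 2^{d-1}\lambda_d$ in (a) too, had you kept the factor $T_d$). Your explicit handling of the one weak inequality, deriving strictness of $T_1 > 2^{d-1}$ from $T_1 \geq 2T_2$ together with the strict $T_2 > 2^{d-2}$, and your check that the chains are nonempty ($d \geq 3$ in (a), $d \geq 4$ in (b)), address precisely the points where the argument could otherwise slip.
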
 

\begin{proof}
For (a), if the partition $(\lambda_1, \dots, \lambda_d)$ satisfies the conditions stated, then we claim that $\lambda_{d-i} > 2^{i-1} \lambda_d$ for $1 \leq i \leq d-1$. This is true for $i = 1$ by assumption. Next, assuming by induction that $\lambda_{d-j} > 2^{j-1} \lambda_d$ for all $1 \leq j < i$, 

$$\lambda_{d-i} \geq \sum_{j = 0}^{i-1} \lambda_{d-j} > \lambda_d  + \sum_{j=1}^{i-1} 2^{j-1} \lambda_d = \lambda_d + (2^{i-1} - 1) \lambda_d = 2^{i-1} \lambda_d.$$

Hence, the total number of vertices $n$ satisfies 

$$n = \lambda_1 + \dots + \lambda_d + 1 > \lambda_d + \sum_{i=1}^{d-1} 2^{i-1} \lambda_d = {\lambda_d + (2^{d-1} - 1) \lambda_d}  = 2^{d-1} \lambda_d.$$

Since $\lambda_d \geq 1$, then {$n > 2^{d-1} \lambda_d\geq2^{d-1}$.}

For (b), we claim that $\lambda_{d-i} \geq 3 (2^{i-2})$ for $2 \leq i \leq d-2$. This is true for $i=2$ since $\lambda_{d-2} \geq 3$ from the conditions given. Next, assuming by induction that this holds for all $2 \leq j < i$, 

$$\lambda_{d-i} \geq \sum_{j=2}^{i-1} \lambda_{d-j} + \lambda_{d-1} + \lambda_d + 1 \geq 3 + \sum_{j=2}^{i-1} 3 (2^{j-2})  = 3 + 3 (2^{i-2} - 1) = 3 (2^{i-2}).$$

Thus, $\lambda_1 \geq \lambda_2 + \dots + \lambda_d \geq \sum_{i=2}^{d-2} 3  (2^{i-2}) + 2 = 3 (2^{d-3}) - 1.$ 

Hence, the total number of vertices $n$, by considering the centre and lengths of each leg,  satisfies

$$n={1+\lambda_1+\cdots + \lambda _d} \geq \sum_{j=0}^{d-3} 3 (2^j) - 1 + 3 = 3 (2^{d-2} - 1) + 2 = 3 (2^{d-2}) - 1 > 2^{d-1}$$
since $d \geq 3$ for a spider.  
\end{proof}

We now arrive at our most general result on the $e$-positivity of spiders.

\begin{theorem}\label{the:partial_e_thm}
Suppose $S=S(\lambda_1, \dots, \lambda_d)$ is an $n$-vertex spider with a vertex of degree $$d \geq \log_2 n + 1.$$ Then $S$ is missing a connected partition of some type $\mu$, and hence is not $e$-positive.
\end{theorem}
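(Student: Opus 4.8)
The plan is to recast the degree hypothesis as the single inequality $n \leq 2^{d-1}$, since $d \geq \log_2 n + 1 \iff d-1 \geq \log_2 n \iff 2^{d-1} \geq n$, and then to play this off against Lemma~\ref{lem:count_vertices}, whose hypotheses are exactly the ``strongly decreasing'' conditions that force $n > 2^{d-1}$. Assuming, as we may, that $\lambda_1 \geq \lambda_2 \geq \cdots \geq \lambda_d \geq 1$, the contrapositive of Lemma~\ref{lem:count_vertices} will tell us that some dominance inequality among the $\lambda_i$ fails, and at that point one of the earlier construction results supplies a missing connected partition, whence $S$ is not $e$-positive by Theorem~\ref{the:e_positivity_crit}. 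I would first note that we may assume $d \geq 4$: a $3$-legged spider has $n = 1 + \lambda_1 + \lambda_2 + \lambda_3 \geq 4 = 2^{d-1}$, with equality only for $S(1,1,1)$, which has three legs of odd length and so is handled by Corollary~\ref{cor:matching_cor}.

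Next I would dispose of the case $\lambda_1 < \lambda_2 + \cdots + \lambda_d$. Here the longest leg satisfies $2\lambda_1 < n - 1$, so $\lambda_1 \leq \lfloor n/2\rfloor - 1 < \lfloor n/2\rfloor$, and Lemma~\ref{lem:short_legs} (valid since $d \geq 3$) immediately produces a missing connected partition of type $(n-\lambda_1-1,\lambda_1+1)$. Thus I may assume $\lambda_1 \geq \lambda_2 + \cdots + \lambda_d$. Combined with $n \leq 2^{d-1}$, part (a) of Lemma~\ref{lem:count_vertices} cannot hold, so its remaining hypotheses must fail: some index $i \in \{2,\dots,d-1\}$ satisfies $\lambda_i \leq \lambda_{i+1} + \cdots + \lambda_d$. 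Choosing $i$ minimal, we have $\lambda_j > \lambda_{j+1} + \cdots + \lambda_d$ for all $2 \leq j < i$.

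The heart of the argument is a split according to $\lambda_i$. If $\lambda_i \geq 2$, I would apply a quotient construction: for $i = 2$ this is Lemma~\ref{lem:quotient_construction}, where one checks that $\lambda_2 = 2$ forces $\lambda_1 \geq 4$ and $\lambda_3 + \cdots + \lambda_d \geq 2$, hence $n \geq 9$ and $q \geq 3$, so its extra hypothesis is met; for $i \geq 3$ this is Lemma~\ref{lem:quotient_construction_2}. Either way $S$ misses the prescribed connected partition. If instead $\lambda_i = 1$, then $\lambda_i = \lambda_{i+1} = \cdots = \lambda_d = 1$, giving $d - i + 1 \geq 2$ legs of length $1$. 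When there are at least three such legs, $S$ has at least three legs of odd length and is not $e$-positive by Corollary~\ref{cor:matching_cor}. When there are exactly two, we have $i = d-1$, $\lambda_{d-1} = \lambda_d = 1$, and $d \geq 4$; minimality of $i$ gives $\lambda_j > \lambda_{j+1} + \cdots + \lambda_d$ for all $2 \leq j \leq d-2$, which is precisely the hypothesis of Lemma~\ref{lem:count_vertices}(b). That lemma then forces $n > 2^{d-1}$, contradicting our standing assumption, so this subcase never arises.

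The main obstacle I anticipate is the boundary bookkeeping rather than any single hard idea. In particular one must verify that the quotient lemmas genuinely apply at the extremes $\lambda_i = 2$ and $i = 3$; here the only delicate point is that we have $\lambda_1 \geq \lambda_2 + \cdots + \lambda_d$ rather than the strict inequality literally demanded for $j = 1$ in Lemma~\ref{lem:quotient_construction_2}, but its proof only uses the resulting bound $q \geq 2^{i-1} \geq 4$, and for $i \geq 3$ the strict steps at $j = 2,\dots,i-1$ already secure this even when $j = 1$ is an equality. The other subtle point is separating the trailing $1$'s into the ``at least three odd legs'' regime, killed by Corollary~\ref{cor:matching_cor}, versus the ``exactly two'' regime, which Lemma~\ref{lem:count_vertices}(b) excludes outright. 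Once the degree bound is recognized as the exact contrapositive trigger for Lemma~\ref{lem:count_vertices}, the remaining task is simply to check that every spider with $n \leq 2^{d-1}$ lands in one of these cases.
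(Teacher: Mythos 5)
Your proposal is correct and follows essentially the same route as the paper's proof: the degree bound is used precisely as the contrapositive trigger for Lemma~\ref{lem:count_vertices}, with Lemma~\ref{lem:short_legs} handling $\lambda_1 < \lambda_2 + \cdots + \lambda_d$, Lemmas~\ref{lem:quotient_construction} and~\ref{lem:quotient_construction_2} (including the same $n \geq 9$, $q \geq 3$ check when $\lambda_2 = 2$) handling $\lambda_i \geq 2$, the matching results killing trailing legs of length $1$, and Lemma~\ref{lem:count_vertices}(b) excluding the case $\lambda_{d-1} = \lambda_d = 1$. Your only departures are organizational --- the upfront reduction to $d \geq 4$ via $S(1,1,1)$, the uniform treatment of all trailing $1$'s through Corollary~\ref{cor:matching_cor}, and the explicit observation that the non-strict inequality at $j=1$ still yields $q \geq 2^{i-1}$ in Lemma~\ref{lem:quotient_construction_2}, a point the paper glosses over.
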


\begin{proof}
By Lemma~\ref{lem:count_vertices}(a), either $\lambda_1 < \lambda_2 + \dots + \lambda_d$ or $\lambda_i \leq \lambda_{i+1} + \dots + \lambda_d$ for some $i \geq 2$ {and $i<d$}, since if not, $d < \log_2 n + 1$.  

{For the first case, if} $\lambda_1 < \lambda_2 + \dots + \lambda_d$, then Lemma~\ref{lem:short_legs} provides a missing connected partition. {For the second case}, if $\lambda_1 \geq \lambda_2 + \dots + \lambda_d$ but $\lambda_2 \leq \lambda_3 + \dots + \lambda_d$ and $\lambda _2\geq 3$,  then Lemma~\ref{lem:quotient_construction} provides a missing connected partition.
If $\lambda _2 =2$ then since $d\geq3$, $\lambda _3$ exists and $\lambda _3 \leq 2$. Hence, $n\geq 9$ so the conditions of Lemma~\ref{lem:quotient_construction} are satisfied and it provides a missing connected partition. Meanwhile, if $\lambda _2=1$ then since $d\geq 3$, $\lambda _3$ exists and {$\lambda _3 = 1$.} Hence, $n\geq 5$ and $d\geq 4$ since $d \geq \log_2 n + 1$ {so $\lambda _4$ exists and $\lambda _4 = 1$.} Thus by Lemma~\ref{lem:spider_matching} the spider does not have a perfect {or almost perfect} matching and hence is missing a connected partition as well.

Otherwise, if the spider does not fall into the {second case subcases} above, {then} let $i \geq 3$ be the least index for which $\lambda_i \leq \lambda_{i+1} + \dots + \lambda_d$ {and $i<d$ is true. Then $d\geq 4$.} If $\lambda_i \geq 2$, Lemma~\ref{lem:quotient_construction_2} provides a missing connected partition. Otherwise, $\lambda_i = 1$. By Lemma~\ref{lem:count_vertices}(b), $i < d-1$ since if $i = d-1$, then $d < \log_2 n + 1$. Hence, the centre of the spider is attached to at least three leaves, and by Lemma~\ref{lem:spider_matching}, the spider does not have a perfect or almost perfect matching, and hence is missing a connected partition. 
\end{proof}

We are now left with spiders $S(\lambda _1, \ldots , \lambda _d)$ with $(\lambda _1, \ldots , \lambda _d)$ a partition,   $\lambda_1 \geq \lambda_2 + \dots + \lambda_d$, and either every other leg $\lambda_i$ satisfies $\lambda_i > \lambda_{i+1} + \dots + \lambda_d$, or $\lambda _{d-1}=\lambda _d = 1$ and all other $\lambda _i$ satisfy this inequality. In this case, a spider may not be $e$-positive but still have a connected partition of every type. The spider $S(6,4,1,1)$ is one such example.
 Otherwise, we can get a missing partition for ``sufficiently large'' spiders by a method similar to Lemma~\ref{lem:quotient_construction_2}.

\begin{lemma}\label{lem:quotient_construction_3}
Suppose $S=S(\lambda_1, \dots, \lambda_d)$ is an $n$-vertex spider with $(\lambda_1, \dots, \lambda_d)$ a partition. Pick some  $\lambda_i$ with $i \geq 2$ and let $n = q (\lambda_i + 1) + r$ {where $0\leq r <\lambda _i +1$}, $r = qd' + r'$ {where $0\leq r' <q$}, and $t = \lambda_{i+1} + \dots + \lambda_d$ {where $i<d$ and $t>1$}. If $q \geq \frac{\lambda_i + 1}{t - 1}$, then $S$ is missing a connected partition of type $$(\lambda_i + d' + 2)^{r'} (\lambda_i + d' + 1)^{q-r'}.$$ 
\end{lemma}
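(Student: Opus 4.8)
The plan is to mirror the contradiction argument used in Lemmas~\ref{lem:quotient_construction} and~\ref{lem:quotient_construction_2}, with the hypothesis $q \geq \frac{\lambda_i+1}{t-1}$ supplying exactly the inequality needed at the final step. First I would record that the claimed type is genuinely a partition of $n$: the parts sum to $r'(\lambda_i+d'+2) + (q-r')(\lambda_i+d'+1) = q(\lambda_i+d'+1)+r' = q(\lambda_i+1)+qd'+r' = q(\lambda_i+1)+r = n$, every part has size at least $\lambda_i+d'+1 \geq \lambda_i+1$, and the largest part has size at most $\lambda_i+d'+2$.

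Next, I would assume for contradiction that $S$ has a connected partition $C$ of this type and consider the set $V_1 \in C$ containing the leaf of a leg of length $\lambda_i$. As in the earlier lemmas, since $|V_1| \geq \lambda_i+1$ exceeds the number of vertices on that leg alone, $V_1$ must contain the centre. I would then argue that $V_1$ absorbs every leg of length at most $\lambda_i$: for such a leg $L$, the block of $C$ containing the leaf of $L$ cannot reach the centre (which lies in $V_1$) except by coinciding with $V_1$, so it either equals $V_1$ — forcing all of $L$ into $V_1$ by connectivity — or is contained in $L$ and hence has size at most $\lambda_i < \lambda_i+1$, which is impossible. Since $(\lambda_1,\dots,\lambda_d)$ is a partition, legs $i, i+1, \dots, d$ all have length at most $\lambda_i$, giving $|V_1| \geq 1 + \lambda_i + \lambda_{i+1} + \cdots + \lambda_d = 1 + \lambda_i + t$.

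The decisive step is to show $d' < t-1$, whence $|V_1| \geq 1 + \lambda_i + t > \lambda_i + d' + 2$ exceeds the largest available part size, yielding the contradiction. Here the hypothesis enters directly: from $q(t-1) \geq \lambda_i+1$ and the floor-division relations $qd' \leq qd' + r' = r \leq \lambda_i$ I obtain $qd' \leq \lambda_i < \lambda_i + 1 \leq q(t-1)$, and dividing through by $q$ gives $d' < t-1$, as required.

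I do not anticipate a serious obstacle, since the structural skeleton is identical to that of Lemma~\ref{lem:quotient_construction_2}; the only genuinely new ingredient is the clean extraction of $d' < t-1$ from the hypothesis $q \geq \frac{\lambda_i+1}{t-1}$. The one point that warrants care is confirming that the ``$V_1$ absorbs all short legs'' argument survives the removal of the normalization hypotheses $\lambda_j > \lambda_{j+1}+\cdots+\lambda_d$ that were present in the earlier lemmas. It does, because that argument relies only on the fact that every block of $C$ has size at least $\lambda_i+1$, which still holds here.
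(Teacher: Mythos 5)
Your proposal is correct and follows essentially the same contradiction argument as the paper: the block $V_1$ containing the leaf of a leg of length $\lambda_i$ must contain the centre and hence all legs of length at most $\lambda_i$, giving $|V_1| \geq 1 + \lambda_i + t$, while the hypothesis $q \geq \frac{\lambda_i+1}{t-1}$ yields $d' < t-1$ and thus $|V_1| > \lambda_i + d' + 2$, exceeding every part of the claimed type. Your multiplicative derivation of $d' < t-1$ (via $qd' \leq r \leq \lambda_i < \lambda_i + 1 \leq q(t-1)$) and your explicit check that the claimed type is a partition of $n$ are minor refinements of what is otherwise the paper's identical argument.
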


\begin{proof}
Suppose $S$ has a connected partition $C$ of the desired type.  Consider the set $V_1\in C$ containing the leaf on a leg of length $\lambda _i$. Since every set {in $C$} must contain at least $\lambda_i + 1$ vertices, $V_1$ contains the centre, and hence also all legs of length less than or equal to $\lambda_i$ since all sets in the connected partition have size greater than $\lambda_i$. Hence, $|V_1| \geq 1 + \lambda_i + \dots + \lambda_d = 1 + \lambda_i + t$, but we claim that this is greater than $\lambda_i + d' + 2$. It suffices to show that $d' < t - 1$. This follows since,

$$d' = \frac{r-r'}{q} \leq \frac{\lambda_i}{q} \leq \frac{\lambda_i}{\lambda_i + 1} (t-1) < t-1$$by the assumption on the value of the quotient $q$.   Hence, $1+\lambda _i+t>\lambda _i+d'+2$, which contradicts that $C$ is a connected partition of the desired type as $V_1$ does not contain $\lambda_i + d' + 2$ or $\lambda_i + d' + 1$ vertices.
\end{proof}

Hence, if  $\lambda_{i+1}, \dots, \lambda_d$ are fixed, then for sufficiently large values of the sum $\lambda_1 + \dots + \lambda_i$, the spider $S(\lambda_1, \dots, \lambda_d)$ will be missing a connected partition of some type. 

\begin{example}\label{ex:136411} If $\lambda = (13, 6,4,1,1)$ and let $i=2$ so that $\lambda _i = 6$, then $26 = 3(7)+5$, $5=3(1) +2$ and $6=4+1+1$. Hence, $q=3$, $\lambda _i +1 = 7$, and $t=6$. Since $3\geq \frac{7}{5}$, we have that $S(\lambda)$ is missing a connected partition of type
$$(6+1+2)^2(6+1+1)=(9,9,8).$$
\end{example}

To end this section, we collect together the various latter lemmas on missing partitions of various types and draw the following conclusion on $e$-positivity, which is immediate by Theorem~\ref{the:e_positivity_crit}.

\begin{proposition}\label{prop:all_e_positive_spiders} If a spider satisfies the criteria of Lemmas~\ref{lem:quotient_construction}, ~\ref{lem:quotient_construction_2} or ~\ref{lem:quotient_construction_3}, then it is not $e$-positive.
\end{proposition}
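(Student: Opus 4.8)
The plan is to reduce the statement directly to the contrapositive of Theorem~\ref{the:e_positivity_crit}. That theorem asserts that an $e$-positive connected $n$-vertex graph must admit a connected partition of \emph{every} type $\mu \vdash n$; equivalently, if a connected graph is missing a connected partition of even a single type, then it cannot be $e$-positive. Thus the entire content of the proposition is already packaged into the three preceding lemmas, each of which, under its own hypotheses, exhibits a specific type $\mu \vdash n$ of which the spider $S$ is missing a connected partition.

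Concretely, I would argue as follows. Suppose $S$ is an $n$-vertex spider satisfying the criteria of one of Lemmas~\ref{lem:quotient_construction}, \ref{lem:quotient_construction_2}, or \ref{lem:quotient_construction_3}. In each case the relevant lemma produces a partition $\mu \vdash n$, namely a type of the form $(\lambda_i + d' + 2)^{r'}(\lambda_i + d' + 1)^{q - r'}$ with the parameters $q$, $r'$, and $d'$ defined there, such that $S$ has no connected partition of type $\mu$. Applying the contrapositive of Theorem~\ref{the:e_positivity_crit} to this single missing type $\mu$ immediately yields that $S$ is not $e$-positive, completing the proof.

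The proof has no genuine obstacle: it is a bookkeeping step that collects the three separate divisibility-based constructions under one heading. The only point worth verifying is a cosmetic one, that in each lemma the exhibited type is an honest partition of $n$ (i.e.\ that the exponents $r'$ and $q - r'$ are nonnegative and that the part sizes sum to $n$); this is guaranteed by the bounds $0 \leq r' < q$ and $n = q(\lambda_i + 1) + r$ imposed in the hypotheses of those lemmas, so no additional work is needed here.
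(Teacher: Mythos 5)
Your proposal is correct and takes essentially the same route as the paper: the proposition is exactly the contrapositive of Theorem~\ref{the:e_positivity_crit} applied to the missing connected partition type that each of Lemmas~\ref{lem:quotient_construction}, \ref{lem:quotient_construction_2} and \ref{lem:quotient_construction_3} exhibits, which is why the paper records it as immediate. Your extra sanity check that the exhibited type is a genuine partition of $n$ also goes through, since $r'(\lambda_i + d' + 2) + (q - r')(\lambda_i + d' + 1) = q(\lambda_i + 1) + qd' + r' = q(\lambda_i+1) + r = n$.
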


\section{The $e$-positivity of trees and cut vertices}\label{sec:trees} We can now use our results from the previous section in conjunction with Lemmas~\ref{lem:spider_lem} and ~\ref{lem:gen_spider_lem} to deduce criteria for $e$-positivity of trees and graphs in general.

\begin{theorem}\label{the:gen_partial_e_thm} If $G$ is an $n$-vertex connected graph with a cut vertex whose deletion produces a graph with $d\geq 3$ connected components such that
$$d \geq \log_2 n + 1$$then $G$ is not $e$-positive. 

In particular, if $T$ is an $n$-vertex tree with a vertex of degree $d\geq 3$ such that
$$d \geq \log_2 n + 1$$then $T$ is not $e$-positive.
\end{theorem}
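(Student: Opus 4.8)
The plan is to reduce directly to the spider case already settled in Theorem~\ref{the:partial_e_thm} and then transport the resulting missing connected partition back up to $G$ via the contrapositive of Lemma~\ref{lem:gen_spider_lem}. First I would set up the reduction. Let $v$ be the cut vertex whose deletion produces connected components $C_1, \dots, C_d$ with $d \geq 3$, and let $c_i = |C_i|$, so that $(c_1, \dots, c_d)$, sorted into decreasing order, is a partition of $n-1$ accounting for the $n-1$ vertices other than $v$. I would then form the spider $S = S(c_1, \dots, c_d)$. Since each $c_i \geq 1$, this spider has $d$ legs and hence its centre has degree exactly $d$, while it has $1 + \sum_i c_i = n$ vertices. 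The key point is that the hypothesis $d \geq \log_2 n + 1$ is inherited verbatim, because $S$ is also an $n$-vertex graph.

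Next I would invoke Theorem~\ref{the:partial_e_thm} on $S$: since $S$ is an $n$-vertex spider whose centre has degree $d \geq \log_2 n + 1$, it is missing a connected partition of some type $\mu \vdash n$. I would then apply the contrapositive of Lemma~\ref{lem:gen_spider_lem}, which asserts that any connected partition of $G$ of type $\mu$ would force a connected partition of $S$ of the same type $\mu$. As $S$ has no connected partition of type $\mu$, neither does $G$. Finally, by the contrapositive of Theorem~\ref{the:e_positivity_crit}, the absence of a connected partition of type $\mu$ shows that $G$ is not $e$-positive, which establishes the general statement.

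For the ``in particular'' tree statement, I would observe that it is simply the special case $G = T$: in a tree $T$ a vertex $v$ of degree $d \geq 3$ is automatically a cut vertex whose deletion produces exactly $d$ connected components, namely the $d$ subtrees rooted at the neighbours of $v$. One may equally invoke Lemma~\ref{lem:spider_lem} in place of Lemma~\ref{lem:gen_spider_lem} throughout the argument above. Either way the tree statement follows immediately.

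I do not anticipate a genuine obstacle here, since the substantive work — producing a missing connected partition for a spider of large centre degree — is precisely the content of Theorem~\ref{the:partial_e_thm}, which has already been proved. The only points requiring care are bookkeeping ones: confirming that the spider built from the component sizes is genuinely an $n$-vertex graph whose centre has degree $d$ (so the logarithmic hypothesis transfers unchanged), and correctly chaining the two contrapositives, first Lemma~\ref{lem:gen_spider_lem} and then Theorem~\ref{the:e_positivity_crit}.
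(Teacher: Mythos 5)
Your proposal is correct and follows essentially the same route as the paper: both reduce to the spider $S(c_1,\dots,c_d)$ via Lemma~\ref{lem:gen_spider_lem}, invoke Theorem~\ref{the:partial_e_thm} for the missing connected partition, and conclude via Theorem~\ref{the:e_positivity_crit}, with the tree case handled either by Lemma~\ref{lem:spider_lem} or by noting that deleting a degree-$d$ vertex of a tree leaves $d$ components. Your write-up merely makes explicit the bookkeeping (that $S$ has $n$ vertices and centre degree $d$) that the paper leaves implicit.
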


\begin{proof} For the first part, by Lemma~\ref{lem:gen_spider_lem} and Theorem~\ref{the:partial_e_thm} every such $n$-vertex graph is missing a connected partition of some type, and hence is not $e$-positive by Theorem~\ref{the:e_positivity_crit}. For the second part we can either repeat the above argument but this time using Lemma~\ref{lem:spider_lem} instead of Lemma~\ref{lem:gen_spider_lem}, or we can note that if we delete a vertex of degree $d$ from a tree, then $d$ connected components remain.
\end{proof}

As a simple example, every tree with 1000 vertices that contains a vertex of degree 11 or more is not $e$-positive. 
In fact, we can use every result from the previous section on spiders that involves a missing connected partition to obtain a result on trees, cut vertices and $e$-positivity. We illustrate this using Lemma~\ref{lem:short_legs}.

\begin{theorem}\label{the:gen_short_legs} If $G$ is an $n$-vertex connected graph with a cut vertex whose deletion produces a graph with connected components $C_1, \ldots , C_d$ such that $d\geq 3$ and $|V_{C_i}|< \lfloor \frac{n}{2}\rfloor$ for all $1\leq i \leq d$, then $G$ is not $e$-positive. 

In particular, if $T$ is an $n$-vertex tree with a vertex for degree $d\geq3$ whose deletion produces subtrees $T_1, \ldots , T_d$  and $|V_{T_i}|< \lfloor \frac{n}{2}\rfloor$ for all $1\leq i \leq d$, then $T$ is not $e$-positive. 
\end{theorem}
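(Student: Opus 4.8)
The plan is to reduce to the spider case and combine Lemma~\ref{lem:short_legs} with the contrapositive of the reduction lemma. Let $v$ be the cut vertex of $G$, and set $c_i = |V_{C_i}|$ for $1 \leq i \leq d$, so that $c_1 + \cdots + c_d = n - 1$. After sorting into weakly decreasing order, these sizes form a partition $\lambda \vdash (n-1)$ with $d \geq 3$ parts, and the associated spider $S = S(c_1, \ldots, c_d)$ is an $n$-vertex tree whose maximum leg length is $m = \max_i c_i < \lfloor \frac{n}{2} \rfloor$ by hypothesis.

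First I would apply Lemma~\ref{lem:short_legs} directly to $S$. Since $\lambda$ has at least three parts and $m < \lfloor \frac{n}{2} \rfloor$, the spider $S$ is missing a connected partition of type $(n-m-1, m+1)$. Here the hypothesis $m < \lfloor \frac{n}{2} \rfloor$ is precisely what guarantees $n-m-1 \geq m+1$, so that $(n-m-1, m+1)$ is a genuine two-part partition of $n$, exactly as in the proof of that lemma.

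Next I would transfer this missing partition back to $G$ using Lemma~\ref{lem:gen_spider_lem} in contrapositive form. That lemma asserts that if $G$ has a connected partition of type $\mu$ then so does $S = S(c_1, \ldots, c_d)$; equivalently, if $S$ is missing a connected partition of type $\mu$, then $G$ is missing one too. Taking $\mu = (n-m-1, m+1)$, the previous step shows that $S$ misses it, hence $G$ misses it as well, and so by Theorem~\ref{the:e_positivity_crit} the graph $G$ is not $e$-positive, proving the first statement. For the tree case, $T$ is connected with a vertex $v$ of degree $d \geq 3$ whose deletion yields exactly $d$ subtrees $T_1, \ldots, T_d$; this is the special case of the first statement with $G = T$, so the same argument applies, or one repeats it verbatim using Lemma~\ref{lem:spider_lem} in place of Lemma~\ref{lem:gen_spider_lem}.

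I expect no serious obstacle here, as the substantive content is already packaged in Lemmas~\ref{lem:short_legs} and~\ref{lem:gen_spider_lem}; the only verification needed is that the spider built from the cut-vertex components satisfies the hypotheses of Lemma~\ref{lem:short_legs}, namely at least three legs and maximum leg length below $\lfloor \frac{n}{2} \rfloor$. The one point I would state carefully is that the reduction lemma is invoked in its contrapositive direction, converting a missing partition of the spider into a missing partition of $G$.
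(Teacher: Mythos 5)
Your proposal is correct and follows essentially the same route as the paper: both reduce to the spider $S(c_1,\ldots,c_d)$, apply Lemma~\ref{lem:short_legs} to exhibit the missing connected partition of type $(n-m-1,m+1)$ with $m=\max_i|V_{C_i}|$, transfer it back via the contrapositive of Lemma~\ref{lem:gen_spider_lem} (or Lemma~\ref{lem:spider_lem} for trees), and conclude by Theorem~\ref{the:e_positivity_crit}. Your explicit remark that the reduction lemma is used in contrapositive form, and your verification that $m<\lfloor\frac{n}{2}\rfloor$ makes $(n-m-1,m+1)$ a genuine partition, only spell out details the paper leaves implicit.
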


\begin{proof} For the first part, by Lemma~\ref{lem:gen_spider_lem} and Lemma~\ref{lem:short_legs} every such $n$-vertex graph is missing a connected partition type $(n-m-1, m+1)$ where
$$m=\max \{|V_{C_1}|, \ldots, |V_{C_d}|\}$$and hence is not $e$-positive by Theorem~\ref{the:e_positivity_crit}. For the second part we can either repeat the above argument but this time using Lemma~\ref{lem:spider_lem} instead of Lemma~\ref{lem:gen_spider_lem}, or we can note that if we delete a vertex of degree $d$ from a tree, then $d$ connected components remain.
\end{proof}

As a more meaningful example, we will now classify when a windmill graph is $e$-positive.

\begin{example}\label{ex:gen_short_legs} Let $K_n$ be the \emph{complete graph} on $n$-vertices, namely the $n$-vertex graph in which every two vertices are adjacent. Let $W^d_n$ for $d\geq 1, n\geq 1$ be the \emph{windmill graph} in which $d$ copies of $K_n$ all have one common vertex $c$. For example, $W^4_3$ is below.

\begin{center}
\begin{tikzpicture}[scale=0.6]
\filldraw [black] (0,0) circle (4pt);
\filldraw [black] (-2,1) circle (4pt);
\filldraw [black] (-2,-1) circle (4pt);
\filldraw [black] (2,1) circle (4pt);
\filldraw [black] (2,-1) circle (4pt);
\filldraw [black] (1,2) circle (4pt);
\filldraw [black] (-1,2) circle (4pt);
\filldraw [black] (1,-2) circle (4pt);
\filldraw [black] (-1,-2) circle (4pt);
\draw (0,0)--(-2,1)--(-2,-1)--(0,0);
\draw (0,0)--(2,1)--(2,-1)--(0,0);
\draw (0,0)--(1,2)--(-1,2)--(0,0);
\draw (0,0)--(1,-2)--(-1,-2)--(0,0);
\end{tikzpicture}
\end{center}

Note that $W^d_n$ has $d(n-1)+1$ vertices. Also note that {for $n>1$} the deletion of $c$ produces $d$ connected components, more precisely $d$ copies of  $K_{n-1}$ each with $n-1$ vertices. Hence, for $d\geq 3$ since 
$$(n-1)<\left\lfloor \frac{d(n-1)+1}{2}\right\rfloor$$every $W^d_n$ for $d\geq 3, {n>1}$ is not $e$-positive by Theorem~\ref{the:gen_short_legs}. In contrast, by say \cite[Theorem 8]{ChovW} and \cite[Corollary 3.6]{Stan95} respectively, every $W^1_n=K_n$ for {$n>1$} and $W^2_n$ for {$n>1$} is $e$-positive. {Lastly, note that $W^d_1$ for $d\geq 1$ is $K_1$ and so, by say \cite[Theorem 8]{ChovW},  is $e$-positive.}
\end{example}

\section{The Schur-positivity of bipartite graphs}\label{sec:bipartite} While $e$-positivity implies Schur-positivity, it is possible for a graph to be Schur-positive but not $e$-positive, for example the spider $S(4,1,1)$ from Examples~\ref{ex:spider} and ~\ref{ex:S411}. 

\begin{center}
\begin{tikzpicture}[scale=0.8]
\filldraw [black] (0,0) circle (4pt);
\filldraw [black] (1,1) circle (4pt);
\filldraw [black] (0,2) circle (4pt);
\filldraw [black] (2.5,1) circle (4pt);
\filldraw [black] (4,1) circle (4pt);
\filldraw [black] (5.5,1) circle (4pt);
\filldraw [black] (7,1) circle (4pt);
\draw[thick] (0,0)--(1,1);
\draw[thick] (0,2)--(1,1);
\draw[thick] (1,1)--(7,1);
\end{tikzpicture}
\end{center}

Again we can determine whether trees or certain graphs are not Schur-positive using a vertex degree criterion, but this time we will need the dominance order on partitions, bipartite graphs, and stable partitions.

For the first of these, recall that given two partitions of $N$, $\lambda = (\lambda _1, \ldots , \lambda _{\ell(\lambda)})$ and $\mu = (\mu _1, \ldots , \mu _{\ell(\mu)})$, we  say that $\lambda$ \emph{dominates} $\mu$, denoted by $\lambda \geq _{dom} \mu$ if
$$\lambda _1 + \cdots + \lambda _i \geq \mu _1 + \cdots + \mu _i $$for all $1\leq i \leq \min\{\ell(\lambda), \ell(\mu)\}$. For the second of these, recall that a graph $G$ is \emph{bipartite} if there exists a proper colouring of $G$ with 2 colours. For the third of these, we say a \emph{stable} partition of an $n$-vertex graph $G$ is a partitioning of its vertex set $V$ into sets $\{V_1, \dots, V_k\}$ such that every set $V_i$ in the partitioning is an independent set, namely no edge $e\in E_G$ exists between any $v_1, v_2 \in V_i$. 
The \emph{type} of a stable partition is the partition of $n$ formed by sorting the sizes of each set $V_i$ in decreasing order. We say $G$ \emph{has a stable partition} of type $\lambda$ if and only if there exists a stable partition of $G$ of type $\lambda$, and is \emph{missing a stable partition} of type $\lambda$ otherwise.

\begin{example}\label{ex:stablepartition} Consider again the $n$-vertex star $S_n$ for $n\geq 4$. The graph $S_n$ has a stable partition of type $(n-1, 1)$ but is missing a stable partition of type $(n-2,2)$.
\end{example}

Stable partitions are intimately related to the Schur-positivity of a graph via the following result.

\begin{theorem}\cite[Proposition 1.5]{Stanley2} \label{thm:s_positivity_crit}
Suppose an $n$-vertex graph $G$ has a stable partition of type $\lambda \vdash n$. If $G$ is Schur-positive, then $G$ has a stable partition of type $\mu$ for every $\mu \leq _{dom}\lambda$.
\end{theorem}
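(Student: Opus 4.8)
The plan is to prove this by passing to the monomial symmetric function basis and reading off the coefficients of $X_G$ combinatorially. First I would recall Stanley's expansion of the chromatic symmetric function in the augmented monomial basis. Writing $\widetilde{m}_\mu = \left(\prod_{i\geq 1} r_i(\mu)!\right) m_\mu$, where $r_i(\mu)$ denotes the number of parts of $\mu$ equal to $i$, one has
$$X_G = \sum_{\pi} \widetilde{m}_{\mathrm{type}(\pi)},$$
the sum ranging over all stable partitions $\pi$ of $G$. This follows directly from Definition~\ref{def:chromsym}: grouping the proper colourings $\kappa$ according to the unordered partition of $V_G$ into colour classes, and then summing $\prod_i x_{c_i}^{|B_i|}$ over all injective assignments of distinct colours $c_i$ to the blocks $B_i$ of a fixed stable partition $\pi$, reproduces exactly $\widetilde{m}_{\mathrm{type}(\pi)}$. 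Consequently the coefficient of $m_\mu$ in $X_G$ equals $\left(\prod_i r_i(\mu)!\right)$ times the number of stable partitions of $G$ of type $\mu$. In particular $[m_\mu]X_G > 0$ if and only if $G$ has a stable partition of type $\mu$.

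Next I would bring in the Schur side. Since $G$ is assumed Schur-positive, write $X_G = \sum_\nu c_\nu s_\nu$ with every $c_\nu \geq 0$. Expanding each Schur function via Kostka numbers, $s_\nu = \sum_\mu K_{\nu\mu} m_\mu$, and comparing coefficients of $m_\mu$ gives $[m_\mu]X_G = \sum_\nu c_\nu K_{\nu\mu}$. The key classical input here is the positivity criterion for Kostka numbers: for partitions $\nu, \mu$ of the same integer, $K_{\nu\mu} > 0$ if and only if $\nu \geq_{dom} \mu$, together with $K_{\nu\nu} = 1$.

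Now I would assemble the argument. Since $G$ has a stable partition of type $\lambda$, the first step yields $[m_\lambda]X_G > 0$, so $\sum_\nu c_\nu K_{\nu\lambda} > 0$; as every $c_\nu \geq 0$, there must exist some $\nu^*$ with $c_{\nu^*} > 0$ and $K_{\nu^*\lambda} > 0$, whence $\nu^* \geq_{dom} \lambda$. Now fix any $\mu \leq_{dom} \lambda$. By transitivity of the dominance order, $\nu^* \geq_{dom} \lambda \geq_{dom} \mu$, so $K_{\nu^*\mu} > 0$, and therefore
$$[m_\mu]X_G = \sum_\nu c_\nu K_{\nu\mu} \geq c_{\nu^*} K_{\nu^*\mu} > 0.$$
By the first step this forces $G$ to have a stable partition of type $\mu$, which is precisely the desired conclusion.

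The main obstacle I anticipate is the first step: correctly establishing $X_G = \sum_\pi \widetilde{m}_{\mathrm{type}(\pi)}$ and, in particular, tracking the multiplicities $\prod_i r_i(\mu)!$ that distinguish the augmented monomials $\widetilde{m}_\mu$ from the ordinary monomials $m_\mu$, so that the sign of $[m_\mu]X_G$ faithfully detects the existence of a stable partition of type $\mu$. Once that bookkeeping is in place, the nonnegativity of the $c_\nu$ together with the Kostka positivity criterion reduces the remainder to a short deduction; the only further point to verify is that all dominance comparisons are between partitions of the same integer $n$, which holds because $X_G$ is homogeneous of degree $n$.
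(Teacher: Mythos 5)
Your proof is correct. The paper offers no proof of this statement---it simply cites Stanley's Proposition 1.5---and your argument (expanding $X_G$ in the augmented monomial basis so that $[m_\mu]X_G>0$ exactly detects a stable partition of type $\mu$, then combining Schur-positivity with the Kostka positivity criterion $K_{\nu\mu}>0$ if and only if $\nu \geq_{dom} \mu$ and transitivity of dominance) is essentially Stanley's original one.
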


This in turn yields a criterion for when a graph is not Schur-positive that is dependent on vertex degrees.

\begin{theorem}\label{the:bipartite_s_pos}
If $G$ is an $n$-vertex bipartite graph with a vertex of degree greater than $\lceil \frac{n}{2} \rceil$, then $G$ is not Schur-positive.

In particular, if $T$ is an $n$-vertex tree with a vertex of degree greater than $\lceil \frac{n}{2} \rceil$, then $T$ is not Schur-positive.
\end{theorem}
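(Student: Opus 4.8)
The plan is to invoke the contrapositive of Theorem~\ref{thm:s_positivity_crit}: it suffices to exhibit one type $\lambda$ of stable partition that $G$ possesses together with a type $\mu \leq_{dom} \lambda$ that $G$ is missing, since this precludes Schur-positivity. I would take $\lambda$ to be the type of a proper $2$-colouring of $G$ and $\mu = (\lceil \frac{n}{2} \rceil, \lfloor \frac{n}{2} \rfloor)$ to be the balanced two-part partition, the point being that the degree hypothesis is exactly the threshold forcing $\mu$ to be unachievable while $\lambda$ remains available.

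First I would fix the vertex $v$ of degree $d > \lceil \frac{n}{2} \rceil$ and record the structural fact that makes everything work: since $G$ is bipartite, $v$ and its neighbourhood $N(v)$ lie in opposite blocks of the bipartition, so $N(v)$ is an independent set of size $d$. Next I would produce the present type $\lambda$. Any proper $2$-colouring of $G$ exists because $G$ is bipartite, and it splits $V_G$ into two independent sets; since $v$ is adjacent to every vertex of $N(v)$, the block not containing $v$ must contain all of $N(v)$ and hence has size at least $d$. Thus $G$ has a stable partition of type $\lambda=(\lambda_1,\lambda_2)$ with $\lambda_1 \geq d > \lceil \frac{n}{2}\rceil$.

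Then I would show $\mu=(\lceil \frac{n}{2}\rceil,\lfloor \frac{n}{2}\rfloor)$ is missing: in any stable partition into exactly two parts, the part not containing $v$ again must contain all of $N(v)$, so it has size at least $d > \lceil \frac{n}{2}\rceil$, which is incompatible with both parts having size at most $\lceil \frac{n}{2}\rceil$. Finally, since for two-part partitions of $n$ dominance reduces to the inequality on first parts, the bound $\lceil \frac{n}{2}\rceil < \lambda_1$ gives $\mu \leq_{dom}\lambda$, and Theorem~\ref{thm:s_positivity_crit} delivers that $G$ is not Schur-positive. The tree statement is then immediate, as every tree is bipartite.

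I do not expect a serious obstacle; the only real content is recognizing the correct pair $(\lambda,\mu)$ and seeing that the bound $d > \lceil \frac{n}{2}\rceil$ is precisely what separates the size of the $N(v)$-block from the balanced value $\lceil \frac{n}{2}\rceil$. The one subtlety I would be careful about is that $G$ need not be connected, so its bipartition is not unique; but since the argument only uses that \emph{some} $2$-colouring exists and that whichever block contains $N(v)$ is large, disconnectedness causes no difficulty.
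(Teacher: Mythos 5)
Your proposal is correct and follows essentially the same route as the paper: both take $\lambda$ to be the type of the two-part stable partition coming from a proper $2$-colouring (whose larger part must contain all $d > \lceil \frac{n}{2}\rceil$ neighbours of $v$), show the balanced type $(\lceil \frac{n}{2}\rceil, \lfloor \frac{n}{2}\rfloor)$ is missing because whichever part omits $v$ must absorb $N(v)$, and conclude via Theorem~\ref{thm:s_positivity_crit} and dominance, with the tree case following since trees are bipartite. Your remark on disconnectedness is a harmless extra observation; the paper's argument likewise never uses uniqueness of the bipartition.
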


\begin{proof}
For the first part, let $G$ be an $n$-vertex bipartite graph with a vertex $v$ of degree $d > \lceil \frac{n}{2} \rceil$. By assumption that $G$ is bipartite, there is a proper colouring of $G$ with two colours. Call these colours red and black, and note that $(V_1, V_2)$ where $V_1$ is the set of the vertices coloured red and $V_2$ is the set of vertices coloured black is a stable partition of $V_G$. The type of this stable partition will be a  partition $(m, n - m)$ where $m > \lceil \frac{n}{2} \rceil$, since the $d > \lceil \frac{n}{2} \rceil$ vertices adjacent to $v$ must be assigned a different colour from the colour of $v$ by assumption.

We claim now that $G$ does not have a stable partition of type $(\lceil \frac{n}{2} \rceil, \lfloor \frac{n}{2} \rfloor)$. Suppose $G$ did have such a partitioning of its vertices into $(V_1, V_2)$ with $|V_1| = \lceil \frac{n}{2} \rceil$ and $|V_2| = \lfloor \frac{n}{2} \rfloor$. If $v \in V_1$, then its neighbours must be in $V_2$, which is impossible since $v$ has degree $d > \lceil \frac{n}{2} \rceil \geq \lfloor \frac{n}{2} \rfloor$. Similarly, if $v \in V_2$, then its neighbours must be in $V_1$, which is impossible since $v$ has degree $d > \lceil \frac{n}{2} \rceil$. 

Since $(\lceil \frac{n}{2} \rceil, \lfloor \frac{n}{2} \rfloor) < _{dom}(m, n - m)$, and $G$ has a stable partition of type $(m, n-m)$, but $G$ is missing a stable partition of type $(\lceil \frac{n}{2} \rceil, \lfloor \frac{n}{2} \rfloor)$, then by Theorem~\ref{thm:s_positivity_crit}, $G$ is not Schur-positive.

For the second part, note that all trees are bipartite.
\end{proof}

\begin{example}\label{ex:bipartite_s_pos} Since the star $S_n$ for $n\geq 4$ is a tree and has one vertex of degree $(n-1)>\lceil \frac{n}{2} \rceil$, it is not Schur-positive, and hence again not $e$-positive, since $e$-positivity implies Schur-positivity.
\end{example}

\section{Further avenues}\label{sec:further} A natural avenue to pursue is to tighten the bound in Theorem~\ref{the:partial_e_thm}, and to this end we conjecture the following, which has been checked for all trees with up to 12 vertices.

\begin{conjecture}\label{con:4vertices} If $T$ is an $n$-vertex tree with a vertex of degree $d\geq 4$, then $T$ is not $e$-positive.
\end{conjecture}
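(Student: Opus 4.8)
The plan is to reduce to spiders and then split into the cases already handled in Section~\ref{sec:spiders} together with a residual family that must be attacked by a direct coefficient computation. First I would invoke Lemma~\ref{lem:spider_lem}: if $T$ has a vertex $v$ of degree $d\geq 4$ and the subtrees hanging off $v$ have sizes $t_1\geq\cdots\geq t_d$, then whenever the associated spider $S=S(t_1,\dots,t_d)$ is missing a connected partition of some type, so is $T$, and hence $T$ is not $e$-positive by Theorem~\ref{the:e_positivity_crit}. Thus it suffices to treat spiders with at least four legs, and for many of them the work is already done.

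Second, I would dispatch every spider with $d\geq 4$ legs that admits a missing connected partition. Concretely: if $\lambda_1<\lambda_2+\cdots+\lambda_d$ then Lemma~\ref{lem:short_legs} applies; if the legs fail the strict domination $\lambda_j>\lambda_{j+1}+\cdots+\lambda_d$ at some intermediate index with $\lambda_j\geq 2$, then one of Lemmas~\ref{lem:quotient_construction} and \ref{lem:quotient_construction_2} applies; and if three legs have odd length, or the centre is attached to three leaves, then Corollary~\ref{cor:matching_cor} applies via Lemma~\ref{lem:spider_matching}. Indeed, the discussion following Theorem~\ref{the:partial_e_thm} shows that the only survivors are the rapidly decreasing spiders with $\lambda_1\geq\lambda_2+\cdots+\lambda_d$ in which every $\lambda_i$ satisfies $\lambda_i>\lambda_{i+1}+\cdots+\lambda_d$ (possibly with the exceptional tail $\lambda_{d-1}=\lambda_d=1$). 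These have a connected partition of every type, so Theorem~\ref{the:e_positivity_crit} is powerless against them; crucially, the spider reduction above preserves only the \emph{existence} of connected partitions and not the individual $e$-coefficients, so for the survivors it yields no reduction at all.

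Third, for the residual family I would compute an explicit negative coefficient, generalizing Remark~\ref{rem:short_legs} and the example $S(6,4,1,1)$. The tool is Stanley's power-sum expansion $X_G=\sum_{F\subseteq E_G}(-1)^{|F|}p_{\lambda(F)}$ from \cite[Theorem 2.5]{Stan95}, where $\lambda(F)$ records the connected-component sizes of the spanning subgraph $(V_G,F)$; converting to the $e$-basis via the Newton--Girard identities then produces $[e_\mu]X_S$. Because each leg length exceeds the sum of all shorter legs, the enumeration of spanning subgraphs of such a spider factors leg by leg and is governed by a short transfer recursion along each path, so I would aim at a two-part target $\mu=(a,b)$ and try to pin down $[e_\mu]X_S$ in closed form and read off its sign, with the goal that a single choice of $\mu$ depending only on the multiset of leg lengths is negative uniformly across the family.

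The main obstacle is precisely this last step. The three-leg case $S(4,1,1)$ shows the phenomenon is delicate: it is not $e$-positive yet is Schur-positive and has a connected partition of every type, so any argument must genuinely exploit $d\geq 4$ rather than mere leg-length bounds, and it cannot lean on the connected-partition criterion at all. I would expect to need either a closed-form evaluation of the $p$-to-$e$ transition on the relevant few-row partitions that makes the sign manifest, or an inductive peeling of a leg in the spirit of Theorems~\ref{the:induction_short_res_1} and \ref{the:induction_short_res_2}, reducing an arbitrary rapidly decreasing spider to a finite list of base cases verifiable by hand. Producing one uniformly negative $e$-coefficient for this entire family, with no assistance from missing connected partitions, is exactly the difficulty that keeps the statement a conjecture.
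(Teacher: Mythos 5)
You should first note that the statement you were given is Conjecture~\ref{con:4vertices} of the paper: it is open, verified by the authors only for trees with up to 12 vertices, so the paper contains no proof to compare against --- and your proposal, as you concede in your final paragraph, is not a proof either. Your first two steps are sound and track Section~\ref{sec:spiders} faithfully: Lemma~\ref{lem:spider_lem} transfers a missing connected partition from the spider $S(t_1,\dots,t_d)$ back to $T$, and Lemma~\ref{lem:short_legs}, Lemmas~\ref{lem:quotient_construction} and~\ref{lem:quotient_construction_2}, and Corollary~\ref{cor:matching_cor} together dispatch everything outside the rapidly decreasing family. One small correction there: it is not accurate that all survivors of those lemmas have connected partitions of every type, since Lemma~\ref{lem:quotient_construction_3} still catches some rapidly decreasing spiders, such as $S(13,6,4,1,1)$ in Example~\ref{ex:136411}; what is true, and what the paper points out via $S(6,4,1,1)$, is that the connected-partition criterion of Theorem~\ref{the:e_positivity_crit} provably cannot finish the job on its own.

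The genuine gap is your third step, which is a research programme rather than an argument. You name plausible tools (Stanley's power-sum expansion, Newton--Girard, a leg-by-leg transfer recursion), but you compute no coefficient, exhibit no candidate type $\mu$, and establish no sign. The paper itself carries out exactly this kind of computation only for the single family $S(r,s,1,1)$, using the Orellana--Scott triple-deletion rule and Wolfe's formula for paths to obtain $[e_{(3,2^{k+\ell})}]X_{S(r,s,1,1)}=-2(r+s)+7$ in Section~\ref{sec:further}, and the authors explicitly report that such results remain local to the family studied, with no natural global missing type or uniformly negative coefficient in sight. Moreover, even a complete solution for spiders would not close the conjecture: as you yourself observe, Lemma~\ref{lem:spider_lem} preserves only the \emph{existence} of connected partitions, so a negative $e$-coefficient for the spider $S(t_1,\dots,t_d)$ says nothing about $X_T$ for a general tree $T$ in the residual family; you would need either a separate coefficient argument for trees whose hanging subtrees are rapidly decreasing in size but are not paths, or a stronger, coefficient-preserving reduction that does not exist in the paper. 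That missing step is precisely why the statement is a conjecture and not a theorem.
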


Towards this one can use our techniques to check that certain families of spiders are missing a particular type of partition. However, so far these results have been local to the family of spiders being studied, and there does not seem to exist a natural global type of partition that is missing. Moreover,  as noted just before Lemma~\ref{lem:quotient_construction_3}, there exist spiders that may not be $e$-positive but still have a connected partition of every type, such as $S(6,4,1,1)$. 

In this case we can prove that the family of spiders $S(r,s,1,1)$ is not $e$-positive by first noting that if $r$ or $s$ is odd then $S(r,s,1,1)$  is not $e$-positive by Corollary~\ref{cor:matching_cor}. If $r$ and $s$ are even then we can prove this by using the triple-deletion rule of Orellana-Scott \cite[Theorem 3.1]{Orellana}, and generalized to $k$-deletion by the first and third authors \cite[Proposition 5]{lollipop}, to express $X_{S(r,s,1,1)}$ as a linear combination of chromatic symmetric functions of unions of paths. From here, by using the formula of Wolfe \cite[Theorem 3.2]{Wolfe} for expressing $X_{P_n}$ as a linear combination of elementary symmetric functions we can  show that if $r=2k$ and $s=2\ell$ then
$$[e_{(3,2^{k+\ell})}]X_{S(r,s,1,1)} = -2(r+s)+7,$$which is negative when $r\geq 2$ and $s\geq 2$. This technique can also be used to show that $S(r, 1, 1)$ for $r\geq 3$ has
$$[e_{(r-1,2^2)}]X_{S(r,1,1)} = -(r-1).$$For example, returning to Example~\ref{ex:S411}, note the term $-3e_{(3,2^2)}$ in $X_{S(4,1,1)}$. Direct calculation yields that $S(1,1,1)$ is not $e$-positive but $S(2,1,1)$ is $e$-positive, and hence deducing that $S(r, 1, 1)$ for $r\geq 3$ is not $e$-positive supports Stanley's statement \cite[p 187]{Stan95} that $S(2,1,1)$ is $e$-positive ``by accident''. {Meanwhile, regarding Schur-positivity, we believe that the bound in Theorem~\ref{the:bipartite_s_pos} cannot be improved. This is implied by the following conjecture, which has been checked for all trees with up to 19 vertices, and with which we end.}

{\begin{conjecture}\label{con:Spositivetrees} For all $n\geq 2$, there exists an $n$-vertex tree with a vertex of degree  {$\lfloor \frac{n}{2} \rfloor$} that is  Schur-positive.
\end{conjecture}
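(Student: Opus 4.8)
Since the statement asserts only existence, the plan is to produce one explicit $n$-vertex tree of degree $\lfloor n/2\rfloor$ for each $n$ and to prove it is Schur-positive. For $2\le n\le 5$ the path $P_n$ already suffices: its leaves have degree $1=\lfloor n/2\rfloor$ when $n\le 3$, and its internal vertices have degree $2=\lfloor n/2\rfloor$ when $n\in\{4,5\}$, while $P_n$ is $e$-positive by \cite{Wolfe} and hence Schur-positive. For $n\ge 6$, so that $d:=\lfloor n/2\rfloor\ge 3$, I would take the balanced spider $T_n$ whose centre has degree $d$ and whose legs are as equal as possible: for odd $n=2k+1$ set $T_n=S(2^k)$, and for even $n=2k$ set $T_n=S(2^{k-1},1)$. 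In both cases a direct count gives exactly $n$ vertices and centre degree exactly $\lfloor n/2\rfloor$, so the construction meets the degree requirement, and the whole problem reduces to showing that $X_{T_n}$ is Schur-positive.

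The cheap necessary conditions come first. The proper $2$-colouring arising from the bipartition of $T_n$ has type $(\lceil n/2\rceil,\lfloor n/2\rfloor)$ in both parities, i.e.\ the balanced type, so the dominance obstruction of Theorem~\ref{thm:s_positivity_crit} that rules out the graphs of Theorem~\ref{the:bipartite_s_pos} is precisely the one that does \emph{not} occur here; this is reassuring but far from sufficient. Note also that $e$-positivity cannot be used as a shortcut for large $n$: under Conjecture~\ref{con:4vertices} any tree with a degree-$4$ vertex fails to be $e$-positive, so for $n\ge 8$ the tree $T_n$ must be Schur-positive \emph{without} being $e$-positive, exactly the delicate regime exemplified by $S(4,1,1)$ in Example~\ref{ex:S411}.

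To establish Schur-positivity itself I would try two complementary routes organised around the fact that the centre is a cut vertex gluing $k$ identical rooted arms (each a path attached at one endpoint). The first route is an induction on the number of arms: conditioning on the colour of the centre expresses $X_{T_n}$ through the rooted chromatic symmetric functions of the arms, and one would aim to maintain a closed Schur expansion as arms are added one at a time. The second route is to apply the triple-deletion relations of Orellana--Scott \cite{Orellana} and their $k$-deletion extension \cite{lollipop} to rewrite $X_{T_n}$ as a linear combination of chromatic symmetric functions of forests of paths; each such forest is Schur-positive because $X_{P_m}$ is Schur-positive by \cite{Wolfe} and products of Schur-positive functions are Schur-positive (the Littlewood--Richardson coefficients being nonnegative). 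Either route recovers the correct function; the task is to arrange the bookkeeping so that every resulting coefficient is manifestly nonnegative.

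The main obstacle is sign control in the Schur basis. No positive combinatorial rule is known for the Schur coefficients of $X_G$ for general $G$: the positive $P$-tableau model of Gasharov \cite{Gasharov} applies only to claw-free incomparability graphs of $(3+1)$-free posets, whereas every $T_n$ with $n\ge 6$ contains a claw and so lies outside its scope. Both routes above yield the function only as a \emph{signed} combination of Schur-positive pieces: the deletion relations reach path-forests through differences, and the $e$-expansion of $X_{T_n}$ genuinely has negative coefficients (since $T_n$ is not $e$-positive for large $n$), so although each $e_\lambda$ is individually Schur-positive the negative $e$-terms contribute negatively in the Schur basis. Thus the crux is to prove that after all cancellation every Schur coefficient of $X_{T_n}$ is nonnegative, uniformly in $n$; it is this uniform net nonnegativity, rather than the easy balanced-dominance check, that resists a short argument and is the reason the statement is recorded here as a conjecture rather than a theorem.
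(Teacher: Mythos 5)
There is no paper proof to compare against: the statement is Conjecture~\ref{con:Spositivetrees}, which the authors leave open and support only by computational verification for all trees with up to $19$ vertices. Your proposal does not prove it either, and to your credit you say so explicitly in your final paragraph. The genuine gap is the entire content of the statement for $n\geq 6$: everything before that point (the choice of $T_n=S(2^k)$ or $S(2^{k-1},1)$, the vertex and degree counts, the small cases $n\leq 5$ via $e$-positive paths, the balanced bipartition passing the dominance test of Theorem~\ref{thm:s_positivity_crit}) is correct but only establishes necessary conditions or trivial cases. Both of your proposed routes --- the rooted-arm recursion at the centre, and the reduction to path forests via the deletion relations of \cite{Orellana} and \cite{lollipop} --- express $X_{T_n}$ as a \emph{signed} combination of Schur-positive pieces, and you supply no mechanism for controlling the cancellation; since $T_n$ has genuinely negative $e$-coefficients for $n\geq 6$, no argument that treats the pieces term-by-term can succeed, so the sign-control problem you name is not a technicality but the whole difficulty.

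Two further points. First, a small correction: you invoke Conjecture~\ref{con:4vertices} to argue $T_n$ is not $e$-positive for $n\geq 8$, but this is unconditional --- every leg of $T_n$ has length at most $2<\lfloor n/2\rfloor$ for $n\geq 6$, so Lemma~\ref{lem:short_legs} already shows $T_n$ is missing a connected partition of type $(n-3,3)$ and hence is not $e$-positive. Second, and more substantively, by committing to the single family $T_n$ you have made your task potentially strictly harder than the conjecture itself: the conjecture asserts only that \emph{some} $n$-vertex tree with a degree-$\lfloor n/2\rfloor$ vertex is Schur-positive, so it would survive even if the balanced spider failed for particular $n$, yet you offer no evidence, even computational, that $T_n$ is Schur-positive for any $n\geq 8$. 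A proof attempt along your lines would need either a positive combinatorial rule for the Schur coefficients of $X_{T_n}$ (none is known outside the claw-free setting of \cite{Gasharov}, which excludes every $T_n$ with $n\geq 6$) or the flexibility to vary the tree with $n$, which your fixed construction forecloses.
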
}
 
\section*{Acknowledgements}\label{sec:acknow} The authors would like to thank John Shareshian for fruitful conversations,  {and the referee for drawing our attention to the connection between claw-free graphs and perfect matchings that then gives Corollary~\ref{cor:clawsandmatching}.}
\bibliographystyle{plain}

\begin{thebibliography}{10}


\bibitem{Jose2+1} J.~Aliste-Prieto, A.~de Mier and J.~Zamora, On trees with the same restricted $U$-polynomial and the Prouhet-Tarry-Escott problem, \emph{Discrete Math.} 340, 1435--1441 (2017).

\bibitem{Jose2} J.~Aliste-Prieto and J.~Zamora, Proper caterpillars are distinguished by their chromatic symmetric function, \emph{Discrete Math.} 315, 158--164 (2014).


\bibitem{ChoHuh} S.~Cho and J.~Huh, On $e$-positivity and $e$-unimodality
of chromatic quasisymmetric functions, \emph{SIAM J. Discrete Math.} 33,  2286--2315 (2019).

\bibitem{ChovW} S.~Cho and S.~van Willigenburg, Chromatic bases for symmetric functions, \emph{Electron. J. Combin.} 23, P1.15 6pp (2016).

\bibitem{Dahl}
{S.~Dahlberg, Triangular ladders $P_{d,2}$ are $e$-positive, {\tt arXiv:1811.04885v1}}

\bibitem{lollipop}
S.~Dahlberg and S.~van Willigenburg, Lollipop and lariat symmetric functions, \emph{SIAM J. Discrete Math.} 32, 1029--1039 (2018).

\bibitem{Foley}
A.~Foley, C.~Ho\`{a}ng and O.~Merkel, Classes of graphs with $e$-positive chromatic symmetric function, \emph{Electron. J. Combin.} 26, P3.51 19pp (2019).
 
\bibitem{FoleyKin}
{A.~Foley, J.~Kazdan, L.~Kr\"{o}ll, S.~Mart\'{i}nez Alberga, O.~Melnyk and  A.~Tenenbaum, Spiders and their kin ($K_n$), {\tt arXiv:1812.03476v1}}

\bibitem{Gasharov} V.~Gasharov, Incomparability graphs of $(3 + 1)$-free posets are
$s$-positive, \emph{Discrete Math.} 157, 193--197 (1996).

\bibitem{Gash} V.~Gasharov, On Stanley's chromatic symmetric function and clawfree graphs, \emph{Discrete Math.} 205, 229--234 (1999).

\bibitem{GebSag} D.~Gebhard and B.~Sagan,  A chromatic symmetric function in noncommuting variables, \emph{J. Algebraic Combin.} 13, 227--255 (2001). 

\bibitem{GP} M.~Guay-Paquet, A modular law for the chromatic symmetric functions of $(3+1)$-free posets, {\tt arXiv:1306.2400v1}

\bibitem{Hamel}
A.~Hamel, C.~Ho\`{a}ng and J.~Tuero,  Chromatic symmetric functions and ${H}$-free graphs, \emph{Graphs Combin.} 35, 815--825 (2019). 

\bibitem{MM} M.~Harada and M.~Precup, The cohomology of abelian Hessenberg
varieties and the Stanley-Stembridge conjecture,  \emph{Algebraic Combinatorics} to appear.

\bibitem{HeilJi}
S.~Heil and C.~Ji, On an algorithm for comparing the chromatic symmetric functions of trees, {\tt arXiv:1801.07363v2}

\bibitem{HuhNamYoo}
J.~Huh, S.-Y.~Nam and M.~Yoo, Melting lollipop chromatic quasisymmetric functions and Schur expansion of unicellular LLT polynomials, \emph{Discrete Math.} 343, 111728 (2020).

\bibitem{Loebl}
M.~Loebl and  J.~Sereni, Isomorphism of weighted trees and Stanley's conjecture for caterpillars, \emph{Ann. Inst. Henri Poincar\'{e} D} 6, 357--384 (2019).

\bibitem{MMW} J.~Martin, M.~Morin and J.~Wagner, On distinguishing trees by their chromatic symmetric
functions, \emph{J. Combin. Theory Ser. A} 115, 237--253 (2008).

\bibitem{Orellana} R.~Orellana and G.~Scott, Graphs with equal chromatic symmetric function, \emph{Discrete Math.} 320, 1--14 (2014).

\bibitem{Paw} 
B.~Pawlowski, Chromatic symmetric functions via the group algebra of $S_n$, {\tt arXiv:1802.05470v3}

\bibitem{SW} J.~Shareshian and M.~Wachs, Chromatic quasisymmetric functions, \emph{Adv. Math.} 295, 497--551 (2016).

\bibitem{Stan95} R.~Stanley, A symmetric function generalization of the chromatic polynomial of a graph, \emph{Adv. Math.} 111, 166--194 (1995).

\bibitem{Stanley2} R.~Stanley, Graph colorings and related symmetric functions: ideas and applications. A description of results, interesting applications, \& notable open problems, \emph{Discrete Math.} 193, 267--286 (1998).

\bibitem{StanStem} R.~Stanley and J.~Stembridge, On immanants of Jacobi-Trudi matrices and permutations with restricted position, \emph{J. Combin. Theory Ser. A} 62, 261--279 (1993).

\bibitem{Tsujie} 
S.~Tsujie,  The chromatic symmetric functions of trivially perfect graphs and cographs, \emph{Graphs Combin.}  34,  1037--1048 (2018).

\bibitem{Tutte} W.~Tutte,  The factorization of linear graphs, \emph{J. Lond. Math. Soc.} 22, 107--111 (1947). 

\bibitem{Wolfe} M.~Wolfe, Symmetric chromatic functions, \emph{Pi Mu Epsilon Journal} 10, 643--757 (1998).

\bibitem{Wolfgang}
H.~Wolfgang III, Two interactions between combinatorics and representation theory: Monomial immanants and {H}ochschild cohomology, PhD thesis, Massachusetts Institute of Technology (1997).

\end{thebibliography}

\def\cprime{$'$}

\end{document}